\theoremstyle{plain}
\newtheorem{thm}{Theorem}[section]
\newtheorem{prop}[thm]{Proposition}
\newtheorem{cor}[thm]{Corollary}
\newtheorem*{clm}{Claim}
\newtheorem*{lem*}{Lemma}
\theoremstyle{definition}
\newtheorem{dfn}[thm]{Definition}
\newtheorem{rem}[thm]{Remark}
\newtheorem{prb}[thm]{Problem}
\newtheorem{ex}[thm]{Example}
\newcommand{\Zn}[1]{\mathbb{Z}/#1\mathbb{Z}}
\newcommand{\regp}[1]{#1_{\mathrm{reg}}}
\title{On the fundamental group of the regular part of Fujiki's compact K\"{a}hler symplectic orbifolds}
\author{Shun Yamaguchi}
\begin{document}
    \maketitle
    \begin{abstract}
        We calculate the fundamental group of the regular part of certain compact K\"{a}hler symplectic orbifolds constructed by Fujiki, called Fujiki's examples.
        We determine which one is an irreducible symplectic orbifold among Fujiki's examples.
        This answers a question posed in \cite{Pe20}.
    \end{abstract}
    
    \section{Introduction}\label{int}
    \subsection*{Symplectic orbifolds and related notions}
    First, we define symplectic orbifolds and related notions.
    Here, we define an \textit{orbifold} to be a normal complex space with only quotient singularities.

    \begin{dfn}
        Let $X$ be a compact K\"{a}hler orbifold and $\omega$ a holomorphic 2-form defined on the regular part $X_\mathrm{reg}$.
        We call the pair $(X, \omega)$ a \textit{symplectic orbifold} when $\omega$ is non-degenerate at each point on $X_\mathrm{reg}$.
    \end{dfn}
    \begin{dfn}
        Let $(X,\omega)$ be a symplectic orbifold.
        \begin{enumerate}[label=(\roman*)]
            \item $(X,\omega)$ is said to be \textit{primitive} when $h^{2,0}(X)=1$.
            \item $(X,\omega)$ is said to be \textit{irreducible} when $h^{2,0}(X)=1$ and $X_\mathrm{reg}$ is simply connected.
        \end{enumerate}
    \end{dfn}

    Irreducible symplectic orbifolds appear in the orbifold version of Beauville-Bogomolov decomposition theorem:
    \begin{thm}[cf. {\cite[Thm. 6.4]{Ca04}}]
        Let $X$ be a compact K\"{a}hler orbifold with trivial first Chern class.
        Then there is a finite, \'{e}tale in codimension one cover $\tilde{X}\to X$ and a decomposition
        $$
            \tilde{X}\cong T\times\prod_{i}Y_i\times\prod_{j}Z_j
        $$
        where $T$ is a complex torus, $Y_i$ are irreducible Calabi-Yau orbifolds and $Z_j$ are irreducible symplectic orbifolds.
        (Here, a compact K\"{a}hler orbifold $Z$ is said to be irreducible Calabi-Yau when $K_Z\sim0$, $H^0(Z_\mathrm{reg},\Omega_{\regp{Z}}^p)=0$ for every $0<p<\mathrm{dim}(Z)$, and $Z_\mathrm{reg}$ is simply connected.) 
    \end{thm}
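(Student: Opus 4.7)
The plan is to follow Beauville's classical proof, adapted to the orbifold category as in Campana's work. First, I would invoke an orbifold version of Yau's theorem: since $X$ has only quotient singularities and $c_1(X)=0$ in $H^2(X,\mathbb{R})$, any K\"{a}hler class on $X$ contains a Ricci-flat orbifold K\"{a}hler metric $g$. The vanishing of $c_1$ also forces $K_X$ to be a torsion $\mathbb{Q}$-Cartier divisor, so a preliminary finite cyclic \'{e}tale-in-codimension-one cover reduces us to the case $K_X\sim 0$.

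Next, I would analyze the restricted holonomy representation of $g$ on $X_{\mathrm{reg}}$. By Berger's classification, each irreducible factor of the holonomy of a Ricci-flat K\"{a}hler metric has holonomy group equal to $\{1\}$, $SU(n)$, or $Sp(n)$. An orbifold version of the de Rham decomposition theorem then produces a further finite \'{e}tale-in-codimension-one cover $\tilde{X}\to X$ which splits as a metric product, with one factor for each irreducible holonomy factor. The trivial-holonomy part integrates, by an orbifold Bieberbach-type argument, to a flat complex torus $T$; each $SU(n_i)$-factor is a simply connected irreducible Calabi-Yau orbifold $Y_i$ (the vanishing of the intermediate spaces $H^0(Y_{i,\mathrm{reg}},\Omega^p)$ following from the non-existence of non-zero parallel forms in intermediate degrees when the holonomy is the full $SU(n_i)$); and each $Sp(n_j)$-factor provides a simply connected irreducible symplectic orbifold $Z_j$.

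The main obstacle is executing these analytic steps in the presence of the quotient singularities. The Calabi-Yau theorem in the orbifold category is now well-established, but the de Rham splitting requires more care, since one must justify parallel transport and holonomy arguments in a neighbourhood of, and across, the singular locus. The saving feature is that $X$ has quotient singularities only in codimension at least two, so loops lying in $X_{\mathrm{reg}}$ already detect the restricted holonomy, and a splitting of $TX_{\mathrm{reg}}$ into parallel subbundles extends uniquely to the tangent sheaf of the appropriate \'{e}tale-in-codimension-one cover. Finally, the simple connectedness asserted in the definitions of irreducible Calabi-Yau and irreducible symplectic orbifolds follows, once the product decomposition is in place, by passing to the maximal \'{e}tale-in-codimension-one cover of each individual factor and invoking an orbifold Cheeger-Gromoll-type argument to show that any non-trivial such cover would introduce an extra flat factor, contradicting irreducibility of the holonomy.
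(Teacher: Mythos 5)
The paper does not prove this theorem at all: it is quoted as a known result from Campana (the ``cf.~[Thm.~6.4]'' citation), so there is no in-paper proof to compare against. Your sketch follows the same holonomy-theoretic route as the cited source and the classical Beauville--Bogomolov argument (orbifold Calabi--Yau theorem, torsion of $K_X$ killed by a cyclic quasi-\'etale cover, Berger classification of the restricted holonomy, orbifold de Rham splitting, Cheeger--Gromoll for the flat factor), and you correctly flag the genuinely delicate point, namely carrying the de Rham decomposition and parallel-transport arguments across the singular locus; as a blind reconstruction of the strategy behind the cited theorem it is accurate, though of course it remains a sketch whose analytic steps are supplied in the references rather than here.
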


    An irreducible symplectic orbifold is always a primitively symplectic orbifold by the definition.
    % Indeed, $H^0(X,\Omega_{X}^{[\bullet]})$ is generated by the symplectic form $\omega$ of $X$, so $h^0(X,\Omega_{X}^{[2]})=1$ and $h^0(X,\Omega_{X}^{[1]})=0$.
    % Since $h^1(X,\mathcal{O}_X)=h^0(X,\Omega_{X}^{[1]})$ by \cite[Proposition 6.9]{GKP}, $X$ is primitive.
    However, the converse is not always true. The following example is such a case in two dimension.
    \begin{ex}\label{singkum}
        % $T$ を 2-dimensional complex torus とし、$\tau$ を (-1)-times map $\tau(p)=-p$  とする。Let $\Lambda\subset\mathbb{C}^2$ be a 2-dimensional lattice and $T=\mathbb{C}^2/\Lambda$ a 2-dimensional complex torus.
        Let $T=\mathbb{C}^2/\Lambda$ be a 2-dimensional complex torus (where $\Lambda\subset\mathbb{C}^2$ is a lattice) and $\tau$ an involution on $T$ defined by $\tau(p)=-p$.
        Then the variety $Y:=T/\langle\tau\rangle$ (a singular Kummer surface) is a primitively simplectic orbifold and has 16 singularities.

        Let $\Sigma$ be an arbitrary subset of $\mathrm{Sing}(Y)$ and $X$  the blowup of $Y$ only at the points in $\Sigma$.
        Then $X$ is also a primitively simplectic orbifold.
        Let us think of $\pi_1(\regp{X})$.
        If $\Sigma=\emptyset$, we have $\pi_{1}(\regp{X})\cong\Lambda\rtimes\langle\tau\rangle$ and it is an infinite group.
        If $\Sigma=\mathrm{Sing}(Y)$, then $X$ is a Kummer surface and $\pi_{1}(\regp{X})=\{1\}$.
        Hence, the larger the subset $\Sigma$ is, the smaller $\pi_{1}(\regp{X})$ is.
        Indeed, we will prove the following claim in Example \ref{ex31}.
        \begin{clm}
            Assume that $\Sigma$ is nonempty.
            Let $p\in \Sigma$ be an arbitrary point. 
            Regard $\mathrm{Sing}(Y)$ as a group isomorphic to $(\Zn{2})^4$ whose identity is $p$ and let $\langle\Sigma\rangle$ be the subgroup of it generated by $\Sigma$.
            Then
            \[
                \pi_1(\regp{X})\cong (\Zn{2})^4/\langle\Sigma\rangle.
            \]
        \end{clm}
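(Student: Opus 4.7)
The plan is to realize $X$ as a global quotient of a blown-up torus and then apply the standard surjection for fundamental groups of complements of smooth divisors. Let $\hat{T}\to T$ be the blowup of $T$ at the points of $\Sigma$. Because $\tau$ acts as $-1$ on each tangent space $T_pT$ for $p\in T[2]$, and $-1$ on $\mathbb{C}^2$ induces the identity on the projectivized tangent space $\mathbb{P}^1$, the involution $\tau$ lifts to $\hat{T}$ and fixes each exceptional curve $E_p$ pointwise. Hence $X\cong\hat{T}/\langle\tau\rangle$, the singular locus of $X$ is the image of the isolated fixed points $T[2]\setminus\Sigma$, and the images $B_p$ of the exceptional curves form a smooth divisor $B=\bigsqcup_{p\in\Sigma}B_p$ in $\regp{X}$ with $\regp{X}\setminus B=\regp{Y}$.

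Next I would invoke the standard fact that for a smooth divisor $B$ in a smooth complex manifold $M$, the inclusion $M\setminus B\hookrightarrow M$ induces a surjection on $\pi_1$ whose kernel is normally generated by meridians around the components of $B$. Applied to $M=\regp{X}$ and combined with the already-stated isomorphism $\pi_1(\regp{Y})\cong\Lambda\rtimes\langle\tau\rangle$, the claim reduces to computing
\[
    (\Lambda\rtimes\langle\tau\rangle)/N,
\]
where $N$ is the normal closure of the meridians $\{\mu_p:p\in\Sigma\}$ around the $B_p$.

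To identify each $\mu_p$ I would work on the universal cover $\mathbb{C}^2\setminus\tfrac{1}{2}\Lambda\to\regp{Y}$, on which $\Lambda\rtimes\langle\tau\rangle$ acts by $z\mapsto\pm z+\lambda$. A small loop in $Y$ around $p\in\Sigma$ lifts to a path from a basepoint $\tilde{x}_0$ to $2\tilde{p}-\tilde{x}_0$ for any preimage $\tilde{p}\in\tfrac{1}{2}\Lambda$ of $p$, so $\mu_p$ corresponds to the deck transformation $(2\tilde{p},\tau)$.

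Finally I would compute the quotient algebraically. Fixing $p_0\in\Sigma$, the relation $(2\tilde{p}_0,\tau)\sim 1$ forces the image of $\tau$ to equal that of $-2\tilde{p}_0\in\Lambda$, so the quotient is already abelian of exponent two. Conjugating $(2\tilde{p}_0,\tau)$ by $\Lambda$ contributes $2\Lambda$ to the kernel, while the products $\mu_p\mu_{p_0}^{-1}=(2(\tilde{p}-\tilde{p}_0),1)$ contribute the subgroup $M_\Sigma:=\sum_{p\in\Sigma}\mathbb{Z}\cdot 2(\tilde{p}-\tilde{p}_0)$. Hence the quotient is
\[
    \Lambda/(2\Lambda+M_\Sigma),
\]
and the canonical isomorphism $\Lambda/2\Lambda\xrightarrow{\sim}T[2]$ given by $[\lambda]\mapsto[\lambda/2]$ takes the extra relations onto $\langle p-p_0:p\in\Sigma\rangle$, which is exactly $\langle\Sigma\rangle$ when $T[2]$ is given the group structure with identity $p_0$; this yields $\pi_1(\regp{X})\cong(\Zn{2})^4/\langle\Sigma\rangle$. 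The principal obstacle I anticipate is the meridian identification in step three: one must track basepoints, the chosen lift $\tilde{p}$, and semidirect-product conventions carefully to justify the formula $\mu_p=(2\tilde{p},\tau)$.
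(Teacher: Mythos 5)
Your argument is correct, but it takes a genuinely different route from the paper. The paper first passes from $\regp{X}$ to the partially compactified open set $U=\regp{Y}\sqcup\Sigma$ inside the singular surface $Y$ using Koll\'ar's theorem on fundamental groups of resolutions of quotient singularities, and then invokes its general Theorem \ref{pi1} to present $\pi_1(U)$ as $\mathcal{G}/N_U$ with $\mathcal{G}=\Lambda\rtimes\langle\tau\rangle$ and $N_U$ generated by the stabilizers of points lying over $U$; these stabilizers are computed to be exactly the elements $t_\lambda\tau$ with $\pi(\lambda/2)\in\Sigma$. You instead stay on the smooth surface $\regp{X}$, realized concretely as an open piece of $\hat{T}/\langle\tau\rangle$, delete the smooth divisor $B$ of images of exceptional curves to recover $\regp{Y}$, and use the classical meridian presentation $\pi_1(\regp{X})\cong\pi_1(\regp{Y})/\langle\langle\mu_p\rangle\rangle$. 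Since your meridians $\mu_p=(2\tilde{p},\tau)$ are, up to conjugacy, exactly the paper's stabilizer generators, the two normal subgroups coincide and the final algebra agrees; your computation relative to an arbitrary base point $p_0\in\Sigma$ is in fact slightly cleaner than the paper's, which implicitly normalizes the origin of $T$ so that $p=\pi(0)\in\Sigma$. What your approach buys is elementarity and independence from both Koll\'ar's theorem and Theorem \ref{pi1}, at the cost of relying on the explicit Kummer-type model $X\cong\hat{T}/\langle\tau\rangle$ (which deserves the one-line local check that blowing up an $A_1$ point of $Y$ agrees with quotienting the blown-up torus); this model is special to dimension two and does not carry over to the four-dimensional Fujiki examples, which is why the paper sets up the more general machinery. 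The only steps you should make fully explicit are the $A_1$ local computation just mentioned and the basepoint/lift bookkeeping for $\mu_p$, though the latter only matters up to conjugacy, which is all the normal closure requires.
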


        % Since the composition map
        % $$
        % \mathbb{C}^2\setminus\Lambda\to T\setminus\{\text{2-torsion points}\}\to X_{\mathrm{reg}}
        % $$
        % is the universal cover of $X_{\mathrm{reg}}$, we have
        % $\pi_{1}(\regp(X))\cong\Lambda\rtimes\langle\tau\rangle$.

        % Let $X$ be a a singular Kummer surface, i.e. $X=A/\langle\iota\rangle$, where $A$ is an abelian surface and $\iota$ is an involution defined by $\iota(p)=-p$.
        % Then $X$ is a primitively simplectic orbifold.
        % However, they are not irreducible symplectic because 
        % $$
        %     A\setminus\{\text{2-torsion points}\}\to X_{\mathrm{reg}}
        % $$
        % is a non-trivial \'{e}tale covering and so $\pi_1(X_{\mathrm{reg}})\ne 1$.
        % (Moreover, $\pi_1(X_{\mathrm{reg}})$ is an infinite group.)
        % $X$ は16個の特異点を持つが、その内の全てではないいくつかをブローアップすると特異な symplectic orbifold が得られる。
        % これが $\pi_1(\regp{X})\ne1$ を満たす irreducible symplectic orbifold である事を Example \ref{ex31} で見る。
        % check:primitively
    \end{ex}
    
    % Let us consider the relation between the irreducibleness of symplectic varieties and the fundamental group of their regular locus.
    % First, we have the following proposition.
    % \begin{prop}
    %     Let $X$ be a symplectic orbifold such that $\pi_1(\regp{X})=1$.
    %     Then $X$ is an irreducible symplectic orbifold.
    %     \label{pi1triv}
    % \end{prop}
    % \begin{proof}
    %     Let $f:X'\to X$ be an arbitrary quasi-\'etale cover.
    %     Then the restriction $f^{-1}(\regp{X})\to\regp{X}$ is a finite \'etale cover because of purity of branch loci.
    %     Since $\pi_1(\regp{X})=1$, the above restriction is of degree 1 and so is $f$.
    %     Therefore, $X$ (trivially) satisfies the defenition of irreducibleness.
    % \end{proof}
    % The converse of Proposition \ref{pi1triv} dose not hold.
    % Let $A$ be an abelian surface and $X=A/\langle\pm 1\rangle$ the singular Kummer surface associated to $A$.
    % $X$ has 16 singularities.
    % Blowing up some but not all of them yields a singular symplectic orbifold.
    % We see in Example \ref{ex31} that this is an irreducible symplectic orbifold satisfying $\pi_1(\regp{X})\ne1$.

    \subsection*{Fujiki's examples}
    A.Fujiki constructed in \cite{Fu82} compact symplectic orbifolds of dimension four.
    % There are compact K\"{a}hler primitively symplectic orbifolds of dimension four called \textit{Fujiki's examples}, which were originally constructed in \cite{Fu82}.
    We describe the construction of them briefly.
    More details are explained in Section 2.
    Let $S$ be a K3 surface or a 2-dimensional complex torus and $H$ a finite abelian subgroup of $\mathrm{Aut}(S)$ whose elements preserve the symplectic form on $S$.
    The group $H$ acts on $S\times S$ via the action defined by $h\cdot(x,y)=(hx,h^{-1}y)$.
    Let $\iota$ be the involution on $S\times S$ defined by $\iota(x,y)=(y,x)$.
    Let $G$ be the subgroup of $\mathrm{Aut}(S\times S)$ generated by $H$ and $\iota$ and let $Y=(S\times S)/G$.
    Then $Y$ is a symplectic orbifold whose singular locus has codimension 2.
    Let $f:X\to Y$ be a $\mathbb{Q}$-factorial terminalization of $Y$. % check: 藤木でも構成されている
    Then $X$ is also symplectic by the definition of terminalization, but not necessary primitively simplectic.
    When $X$ is primitive, we call it a Fujiki's example.
    % Fujiki's examples are classified as follows:
    The deformation class of $X$ is determined only by the isomorphism class of $H$, and the possibilities are classified as shown in Table \ref{k3tab} for the case where $S$ is a K3 surface and Table \ref{totab} for the case where $S$ is a complex torus (\cite[Theorem 13.1]{Fu82}). % check: locaction of table
    \begin{table}[ht]
        \centering
        \caption{$S$: K3 surface}
        \label{k3tab}
        \begin{tabular}{cl}
            \toprule
            $H$ & Singularities of $X$ \\
            \midrule
            $\{1\}$ & smooth \\
            $\Zn{2}$ & $a_2=28$ \\
            $(\Zn{2})^{2}$ & $a_2=36$ \\
            $(\Zn{2})^{3}$ & $a_2=28$ \\
            $\Zn{3}$ & $a_3=15$ \\
            $(\Zn{3})^{2}$ & $a_3=12$ \\
            $\Zn{4}$ & $a_2=10,\; a_4=6$ \\
            $(\Zn{4})^{2}$ & $a_2=6$ \\
            $\Zn{6}$ & $a_2=9,\; a_3=6,\; a_6=1$ \\
            $\Zn{2}\times\Zn{4}$ & $a_2=12,\; a_4=4$ \\
            $\Zn{2}\times\Zn{6}$ & $a_2=12,\; a_3=3$ \\
            \bottomrule
            % check: not known
        \end{tabular}
    \end{table}
    \begin{table}[ht]
        \centering
        \caption{$S$: complex torus}
        \label{totab}
        \begin{tabular}{cl}
            \toprule
            $H$ & Singularities of $X$ \\
            \midrule
            $\Zn{3}$ & $a_3=36$ \\
            $(\Zn{3})^2$ & $a_3=27$ \\
            $(\Zn{3})^3$ & smooth \\
            $\Zn{4}$ & $a_2=54,\; a_4=6$ \\
            $\Zn{2}\times\Zn{4}$ & $a_2=48,\; a_4=4$ \\ 
            $(\Zn{2})^2\times\Zn{4}$ & $a_2=36$ \\
            $\Zn{6}$ & $a_2=36,\; a_3=16$ \\
            \bottomrule
        \end{tabular}
    \end{table}
    Here, $a_k$ is the number of singular points of type $(\mathbb{C}^4/g_k, 0)$, where $g_k=\mathrm{diag}(\zeta_k,\zeta_k,\zeta_k^{-1}, \zeta_k^{-1})$ and $\zeta_k:=e^\frac{2\pi i}{k}$.

    \begin{rem}
        As pointed out in \cite[Remark 5.8]{menet}, there are some mistakes in \cite[Theorem 13.1]{Fu82} about the number of singular points of Fujiki's examples obtained from a K3 surface.
        We also found mistakes for Fujiki's examples obtained from a complex torus, and corrected them in Table \ref{totab}.
    \end{rem}
    
    As indicated in the tables, $X$ is smooth if $S$ is a K3 surface and $H\cong \{1\}$, or $S$ is a complex torus and $H\cong (\Zn{3})^{2}$.
    In the former case, $X$ is nothing but $\mathrm{Hilb}^2(S)$, 
    and in the latter case, $X$ is deformation equivalent to $\mathrm{Kum}^2(S)$ (cf. \cite[Proposition 14.3]{Fu82}).

    In \cite{Pe20}, A.Perego posed the following problem:
    \begin{prb}
        Which Fujiki's example is an irreducible symplectic orbifold? \label{prb}
    \end{prb}
    The purpose of this paper is to calculate $\pi_1(X_\mathrm{reg})$ for each Fujiki's example $X$ and give an answer for Problem \ref{prb}.
    The main result is the following:
    \begin{thm}\leavevmode
        \label{main}
        \begin{enumerate}[label=\textup{(\roman*)}]
            \item If $X$ is a Fujiki's example obtained from a K3 surface, then $\pi_1(X_\mathrm{reg})=1$, i.e., $X$ is an irreducible symplectic orbifold. \label{maink3}
            \item If $X$ is a Fujiki's example obtained from a 2-dimensional complex torus, then $\pi_1(X_\mathrm{reg})$ is as in Table \ref{pi1tab}:
            \begin{table}[htbp] 
                \caption{}
                \label{pi1tab}
                \centering
                    \begin{tabular}{cc} 
                        \toprule
                        $H$ & $\pi_1(X_\mathrm{reg})$ \\
                        \midrule
                        $\Zn{3}$ & $(\Zn{3})^2$ \\
                        $(\Zn{3})^2$ & $\Zn{3}$ \\
                        $(\Zn{3})^3$ & $\{1\}$ \\
                        $\Zn{4}$ & $(\Zn{2})^2$ \\
                        $\Zn{2}\times\Zn{4}$ & $\Zn{2}$ \\ 
                        $(\Zn{2})^2\times\Zn{4}$ & $\{1\}$ \\
                        $\Zn{6}$ & $\{1\}$ \\
                        \bottomrule
                    \end{tabular}
            \end{table}
            
        \end{enumerate}
    \end{thm}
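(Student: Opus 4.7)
The plan is to reduce the computation of $\pi_1(X_{\mathrm{reg}})$ to a group-theoretic question about $G$ in the K3 case and about an extension of $G$ by a lattice in the torus case. Since every non-trivial $g\in G$ has fixed locus on $S\times S$ of complex codimension at least $2$, the free locus $(S\times S)^{\circ}$ satisfies $\pi_1((S\times S)^{\circ})\cong\pi_1(S\times S)$, and $(S\times S)^{\circ}\to Y_{\mathrm{reg}}$ is a Galois cover, giving
\[
1\to\pi_1(S\times S)\to\pi_1(Y_{\mathrm{reg}})\to G\to 1.
\]
Writing $U:=X\setminus E$ for the complement of the exceptional locus of $f\colon X\to Y$, the restriction $f|_U$ is an isomorphism onto $Y\setminus f(E)$; since all singularities of $X$ are terminal and lie in $U$, we have $U\cap X_{\mathrm{reg}}=Y_{\mathrm{reg}}$, while $X_{\mathrm{reg}}\setminus Y_{\mathrm{reg}}$ is the smooth part of $E$, a disjoint union of smooth divisors in $X_{\mathrm{reg}}$. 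The standard Van Kampen argument then yields
\[
\pi_1(X_{\mathrm{reg}})=\pi_1(Y_{\mathrm{reg}})/N,
\]
with $N$ the normal closure of the meridians around those divisors.

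The $2$-dimensional components of $\mathrm{Sing}(Y)$ are exactly the images of fixed loci of elements $h\iota\in G\setminus H$, each fixing a copy of $S$ with transverse $A_1$ singularity; these are non-terminal and are therefore crepantly resolved by $f$, so the meridian around each resulting exceptional divisor in $\pi_1(Y_{\mathrm{reg}})$ lifts $h\iota\in G$. Since $\iota$ itself is such a meridian (take $h=1$) and $(h\iota)(h'\iota)=h(h')^{-1}\in H$, the image of $N$ in $G$ equals all of $G$. In the K3 case $\pi_1(S\times S)=1$, so $\pi_1(Y_{\mathrm{reg}})=G$ and $\pi_1(X_{\mathrm{reg}})=G/N=\{1\}$; this proves part \ref{maink3}.

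For part (ii), $\pi_1(S\times S)=\Lambda\oplus\Lambda=\Lambda^2$ and $\pi_1(Y_{\mathrm{reg}})=\tilde G$ fits in the extension above, with $h\in H$ acting on $\Lambda^2$ as $(a,b)\mapsto(ha,h^{-1}b)$ and $\iota$ by swap of factors. Since $N$ surjects onto $G$, the quotient $\tilde G/N$ is a quotient of $\Lambda^2$ via the identification $\tilde G/N\cong\Lambda^2/(\Lambda^2\cap N)$. Conjugating each meridian by $\lambda\in\Lambda^2$ produces $((1-h\iota)\lambda,1)\in N$, and combining over $h$ and with products of meridians yields $(1-g)\Lambda^2\subset N\cap\Lambda^2$ for every $g\in G$; hence $\pi_1(X_{\mathrm{reg}})$ is a quotient of the coinvariants $(\Lambda^2)_G=\Lambda^2/\sum_g(1-g)\Lambda^2$. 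The extension $\tilde G$ need not split: when $H$ contains non-trivial translations of $T$ (as must happen whenever $H$ is non-cyclic, e.g.\ $H=(\Zn{3})^3$ or $(\Zn{2})^2\times\Zn{4}$), the square of a meridian lift $(t_h,h\iota)\in\mathrm{Aff}(\mathbb{C}^4)$ contributes an additional element $((1+h\iota)t_h,1)\in N\cap\Lambda^2$ that is not a priori in $\sum_g(1-g)\Lambda^2$. These cocycle corrections are exactly what separate the seven rows of Table \ref{pi1tab}.

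The principal obstacle is the final case-by-case computation for part (ii): for each entry of Table \ref{totab} one must fix the $\mathbb{Z}[G]$-module structure of $\Lambda^2$ prescribed by Fujiki's classification, record the precise translation parts $t_h$ of the meridians, and determine the resulting subgroup $N\cap\Lambda^2\subset\Lambda^2$. The method is a four-dimensional generalisation of the analysis outlined for the singular Kummer surface in the claim of Example \ref{singkum}; the rows of Table \ref{pi1tab} where $\pi_1(X_{\mathrm{reg}})=\{1\}$ correspond to those $H$ in which the cocycle relations fully trivialise $(\Lambda^2)_G$.
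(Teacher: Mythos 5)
Your part (i) is essentially correct and close in spirit to the paper's argument: the paper passes to the open set $U=Y\setminus\pi\bigl(\bigcup_{h\in H\setminus\{1\}}\mathrm{Fix}(h)\bigr)$, shows $\pi_1(X_{\mathrm{reg}})\cong\pi_1(U)$ (Proposition \ref{Uprop} and Corollary \ref{Ucor}), and then proves a general covering--space statement, Theorem \ref{pi1}, giving $\pi_1(U)\cong\mathcal{G}/N_U$ with $N_U$ generated by stabilizers; your meridians are exactly these stabilizer generators $\iota h$, and $(h\iota)(h'\iota)=h(h')^{-1}$ gives $N_U=G$ as in Corollary \ref{k3}. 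One caveat even here: you assert that $X_{\mathrm{reg}}\setminus f^{-1}(Y_{\mathrm{reg}})$ is a disjoint union of smooth divisors, but it also contains the fibres of $f$ over the isolated singular points of $Y$; one needs $\dim f^{-1}(y)\le 2$ (the paper invokes \cite[Cor. 0.2]{Na22}) to know these contribute no further meridians. Extra meridians could only shrink the group, so the conclusion $\pi_1=1$ in the K3 case survives, but this point cannot be waved away in part (ii), where you must show the group is no smaller than claimed.

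The genuine gap is part (ii), for two reasons. First, your argument only produces an upper bound: you exhibit elements of $N\cap\Lambda^2$ (the sublattice $\sum_g(1-g)\Lambda^2$ plus the cocycle corrections from the translation parts of the meridian lifts) and conclude that $\pi_1(X_{\mathrm{reg}})$ is a quotient of the corresponding group, but you never show $N\cap\Lambda^2$ is not larger. The paper's Theorem \ref{pi1} settles this by proving that $Z/N_U\to U$ is an unramified cover with simply connected total space, hence universal; you have no substitute for that step. Second, and decisively, the table is the content of the statement, and you explicitly defer the case-by-case computation that produces it. The paper carries this out in Corollary \ref{tor}: it rewrites $Y=\mathbb{C}^4/(\Gamma\rtimes D_n)$ where $\Gamma\supset\Lambda^2$ is the overlattice determined by the translation part $H'$ of $H$, proves $N_U=\Gamma'\rtimes D_n$ with $\Gamma'=\Gamma_0\oplus\Gamma_1$ and $\Gamma_k=\Gamma\cap\{y=-\sigma^k x\}$, and then computes $\Gamma/\Gamma'$ from an explicit basis of $\Lambda$ adapted to $\sigma$ for each $H$. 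Without an analogue of that lattice computation, none of the seven entries of Table \ref{pi1tab} is established, so the proposal does not prove the theorem.
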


    Consequently, four Fujiki's examples in Table \ref{pi1tab} are not irreducible symplectic.
    In these cases, the universal cover of $X_\mathrm{reg}$ extends to a finite cover $\tilde{X}$, which is \'{e}tale in codimension one cover. Then $\tilde{X}$ is an irreducible symplectic orbifold with only isolated singularities and is isomorphic to one of the remaining three Fujiki's examples in Table \ref{pi1tab} (cf. Remark \ref{rem}). %check: (...) 

    We briefly describe our method to prove Theorem \ref{main}.
    Firstly, we prove a general result.
    Let $Z$ be a simply connected complex manifold and $\mathcal{G}$ a discrete subgroup of $\mathrm{Aut}(Z)$ such that $Z/\mathcal{G}$ is an orbifold.
    Then for an arbitrary open subset $U\subset Z/\mathcal{G}$ containing the regular part of $Z/\mathcal{G}$, we define a certain normal subgroup $N_U\subset \mathcal{G}$ and prove that
    $\pi_1(U)\cong \mathcal{G}/N_U$ (Theorem \ref{pi1}).
    % We use this result to determine $\pi_1(\regp{X})$.
    Let us return to the setting of Theorem \ref{main}.
    By Proposition \ref{Uprop}, we can take a Zariski open subset $U$ of $Y$ in such a way that
    \begin{enumerate}[label=\textup{(\roman*)}]
        \item $U\supset \regp{Y}$,
        \item $\regp{X}\supset f^{-1}(U)$,
        \item $\mathrm{Codim}(\regp{X}\setminus f^{-1}(U), \regp{X})\geq 2$. \label{cond3}
    \end{enumerate}
    Since $U$ has only quotient singularities, we have %check: refer
    \[
        f_*:\pi_1(f^{-1}(U))\stackrel{\sim}{\longrightarrow} \pi_1(U).
    \]
    On the other hand, we have an isomorphism
    \[
        \pi_1(f^{-1}(U))\stackrel{\sim}{\longrightarrow} \pi_1(\regp{X})
    \]
    by \ref{cond3}.
    % Hence, we see that $\pi_1(\regp{X})\cong\pi_1(U)$.
    % There is an open subset $U\subset Y=(S\times S)/G$ containing $Y_\mathrm{reg}$ such that 
    % $
    %     \pi_1(X_\mathrm{reg})\cong \pi_1(U)
    % $
    % (cf. the expression (\ref{Udef}) in Section \ref{sec_fujiki}).
    Finally, by writing $Y$ in the form $Z/\mathcal{G}$ , we get
    \[
        \pi_1(\regp{X})\cong\pi_1(U)\cong\mathcal{G}/N_U
    \]
    and then we calculate $\mathcal{G}/N_U$.
    If $S$ is a K3 surface, we simply set $Z=S\times S$ and $\mathcal{G}=G$ (Corollary \ref{k3}).
    If $S=\mathbb{C}^2/\Lambda$ is a complex torus (where $\Lambda$ is a lattice), we set $Z=\mathbb{C}^2\times\mathbb{C}^2$ and $\mathcal{G}$ a certain discrete group containing $\Lambda\times\Lambda$ as a subgroup of finite index. In this case, the calculation becomes more complicated (Corollary \ref{tor}).
    % check: 記号

    As is well-known, $\mathrm{Hilb}^2(S)$ (where $S$ is a K3 surface) or $\mathrm{Kum}^2(S)$ (where $S$ is a 2-dimensional complex torus) is simply connected (cf. \cite[Lemma 6.1 and Proposition 7.8]{Be83}).
    Our method also gives a new proof to this fact.

    In \cite{menet}, G.Menet investigates the generalization of Fujiki's examples which are obtained from K3 surfaces, and have been proved results similar to Theorem \ref{pi1} and Theorem \ref{main} \ref{maink3} (\cite[Proposition 2.13]{menet}).

    After writing this article, we were informed by Mirko Mauri that they also obtained similar results in \cite[Proposition 1.14]{Mau}.

    % check: indepedent?
    
    \subsection*{Acknowledgements}
    The author would like to thank his advisor, Yoshinori Namikawa, for the helpful discussions, suggestions, and encouragement provided during this research.
    He also thanks G.Menet for helpful comments and
    discussions.
    He is grateful to his family for their understanding and constant support.

    \section{Fujiki's examples}
    \label{sec_fujiki}
    In this section, we describe the construction of Fujiki's examples in further detail.
    Let $S$ be either a K3 surface or a 2-dimensional complex torus and $H$ a finite abelian subgroup of $\mathrm{Aut}(S)$ such that each element in $H$ preserves the symplectic form on $S$. 
    If $S$ is a complex torus, we also assume that $H$ is not contained in the group of translations. 
    Then the possibilities of the isomorphism class of $H$ as a group are classified as follows (see {\cite[Prop. 12.5]{Fu82}}):

    \subsubsection*{(1) The case $S$ is a K3 surface.}
    \begin{enumerate}[label=(1.\alph*)]
        \item $(\Zn{2})^m, \quad(0\leq m\leq 4)$ \label{k1},
        \item $(\Zn{3})^m, \quad(1\leq m\leq 2)$ \label{k2},
        \item $(\Zn{4})^m, \quad(1\leq m\leq 2)$ \label{k33},
        \item $\Zn{p}, \quad(p=5,7,8)$ \label{k4},
        \item $\Zn{6}, $ \label{k4.5},
        \item $\Zn{2}\times\Zn{q}, \quad(q=4,6)$ \label{k5}. 
    \end{enumerate}
    \subsubsection*{(2) The case $S$ is a complex torus.}
    \begin{enumerate}[label=(2.\alph*)]
        \item $(\Zn{2})^m\times\Zn{2}, \quad(0\leq m\leq 4)$ \label{t2},
        \item $(\Zn{3})^m\times\Zn{3}, \quad(0\leq m\leq 2)$ \label{t3},
        \item $(\Zn{2})^m\times\Zn{4}, \quad(0\leq m\leq 2)$ \label{t4},
        \item $\Zn{6}\;(\cong\{1\}\times\Zn{6}) $ \label{t6}.
    \end{enumerate}

    In the torus cases, the first factor indicates the translation part, and the second factor indicates the linear part. 
    If we regard the translation part as a subgroup of $S$, all of its elements are fixed by the action of the linear part because of the abelianness.
    % This is because the action of the linear part and that of the translation part commutes.

    Define the action of $H$ on $S\times S$ by 
    $$
    \begin{array}{ll}
        h(x,y)=(hx,h^{-1}y) & \textup{($h\in H$, $(x,y)\in S\times S$ )}
    \end{array}
    $$
    and the involution $\iota\in \mathrm{Aut}(S\times S)$ by
    $$
    \begin{array}{ll}
        \iota(x,y)=(y,x) & \textup{($(x,y)\in S\times S$ )}.
    \end{array}
    $$
    Let $G$ be the subgroup of $\mathrm{Aut}(S\times S)$ generated by $H$ and $\iota$.
    Then for any $h\in H$ and $(x,y)\in S\times S$,
    \begin{align*}
        \iota h\iota^{-1}(x,y)
        &=\iota h\iota(x,y) \\
        &=\iota h(y,x) \\
        &=\iota (hy,h^{-1}x) \\
        &=(h^{-1}x,hy) \\
        &=h^{-1}(x,y).
    \end{align*}
    Hence, $\iota h\iota^{-1}=h^{-1}\in H$, and so $G=H\rtimes \langle\iota\rangle$.

    Let $Y=(S\times S)/G$ and $f:X\to Y$ a $\mathbb{Q}$-factorial terminalization of $Y$. 
    This exists by \cite[Corollary 1.4.3]{BCHM}. 
    Except in the case of \ref{k4}, $X$ is constructed explicitly in \cite[\S 7]{Fu82} and it is proven that $X$ has only quotient singularities.
    Therefore, $X$ is a symplectic orbifold except in the case of \ref{k4}.
    Moreover, \cite[Lemma 13.4]{Fu82} shows that $X$ is a primitively symplectic except the case \ref{t2}.
    Consequently, except in the cases of \ref{t2} and \ref{k4},
    we obtain a primitively symplectic orbifold $X$, which we call a Fujiki's example.

    Let us observe the singularities of $Y$, which are the images of fixed points on $S\times S$ by the action of $G$.
    Since $G=H\rtimes \langle\iota\rangle$, each element $g\in G\setminus\{1\}$ is expressed in the form either $h$ or $\iota h$ for some $h\in H$.
    In the case $g=\iota h$ we have
    \begin{align*}
        \mathrm{Fix}_{S\times S}(\iota h)
        &=\{(x,y)\in S\times S\mid (h^{-1}y,hx)=(x,y)\}\\
        &=\{(x,y)\in S\times S\mid y=hx\}\\
        &=\{(x,hx)\in S\times S\mid x\in S\}.
    \end{align*}
    Thus, $\mathrm{Fix}_{S\times S}(\iota h)\,(\cong S$) is a 2-dimensional locus on $S\times S$ and its image by the natural projection $\pi:S\times S\to Y$ is a 2-dimensional singlular locus of $Y$.
    In the case $g=h$ we have
    \begin{align*}
        \mathrm{Fix}_{S\times S}(h)
        &=\{(x,y)\in S\times S\mid (hx,h^{-1}y)=(x,y)\}\\
        &=\{(x,y)\in S\times S\mid hx=x, hy=y\}\\
        &=\{(x,y)\in S\times S\mid x,y\in \mathrm{Fix}_S(h)\}.
    \end{align*}
    Since $h\,(\ne 1)$ is a symplectic automorphism on $S$, it has only isolated fixed points.
    This implies that $\mathrm{Fix}_{S\times S}(h)$ consists of finitely many points.
    Let $p$ be a point of $\mathrm{Fix}_{S\times S}(h)$.
    There are two possibilities of the image $\pi(p)$.
    The first one is an intersection point of 2-dimensional singular loci and the second one is a isolated singular point.
    In summary, $Y$ has 2-dimensional singular loci (which intersect each other) and isolated singlular points.

    For later use, we prove the next proposition.
    \begin{prop}\label{Uprop}
        Define an open subset $U$ of $Y$ by
        \[
            U=Y\setminus \pi\left(\bigcup_{h\in H\setminus\{1\}}\mathrm{Fix}_{S\times S}(h)\right).
        \]
        Then $U$ satisfies the following conditions:
        \begin{enumerate}[label=\textup{(\roman*)}]
            \item $U\supset \regp{Y}$, \label{cond21}
            \item $\regp{X}\supset f^{-1}(U)$, \label{cond22}
            \item $\mathrm{Codim}(\regp{X}\setminus f^{-1}(U), \regp{X})\geq 2$. \label{cond23}
        \end{enumerate}

    \end{prop}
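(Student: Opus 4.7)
My plan is to verify the three conditions separately, using the description of $\mathrm{Sing}(Y)$ recalled just above the proposition: the singular locus decomposes into the two-dimensional strata $\pi(\mathrm{Fix}_{S\times S}(\iota h))$ for $h\in H$, together with the isolated points of $\pi(\mathrm{Fix}_{S\times S}(h))$ for $h\in H\setminus\{1\}$; the set removed to form $U$ is exactly the latter.

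For \textup{(i)}, I would show that every removed point $\pi(p)$ is singular. Writing $p=(p_1,p_2)\in\mathrm{Fix}_{S\times S}(h)$ with $h\in H\setminus\{1\}$, each $p_i$ is an isolated fixed point of the nontrivial symplectic automorphism $h$ of $S$, so the eigenvalues of $dh_{p_i}$ on $T_{p_i}S$ are $\zeta,\zeta^{-1}$ with $\zeta\neq 1$. Thus $1$ is not an eigenvalue of $dh_p$ on $T_p(S\times S)$, so $h\in G_p$ fixes no hyperplane and in particular is not a pseudo-reflection. Chevalley--Shephard--Todd then gives $\pi(p)\in\mathrm{Sing}(Y)$, whence $\regp{Y}\subset U$.

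For \textup{(ii)}, $f$ is an isomorphism over $\regp{Y}$, so only the part of $U$ lying on the two-dimensional strata needs analysis. At a point $\pi(p)\in U\cap\mathrm{Sing}(Y)$, the point $p$ lies on exactly one $\mathrm{Fix}_{S\times S}(\iota h)$: if it lay on $\mathrm{Fix}_{S\times S}(\iota h)$ and $\mathrm{Fix}_{S\times S}(\iota h')$ with $h\neq h'$, the composition would yield $p\in\mathrm{Fix}_{S\times S}(h^{-1}h')$ with $h^{-1}h'\neq 1$, excluding $\pi(p)$ from $U$. Hence the stabilizer is exactly $\langle\iota h\rangle\cong\Zn{2}$, acting by $-1$ on the two-dimensional transverse slice, so $Y$ has a transverse $A_1$ singularity locally of the form $\mathbb{C}^2\times(\mathbb{C}^2/\{\pm 1\})$. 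This is canonical but not terminal, so any $\mathbb{Q}$-factorial terminalization must resolve it crepantly, producing smooth points in $X$; therefore $f^{-1}(U)\subset\regp{X}$.

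For \textup{(iii)}, $Y\setminus U$ is a finite set of points, and by Fujiki's explicit construction in \cite[\S 7]{Fu82} the fibers of $f$ are at most one-dimensional---generically $\mathbb{P}^1$ over the two-dimensional strata, unions of $\mathbb{P}^1$'s over their intersection points, and single points over the isolated terminal singularities $\mathbb{C}^4/g_k$. Thus $\regp{X}\setminus f^{-1}(U)\subset f^{-1}(Y\setminus U)$ has dimension at most one, giving codimension at least two in the four-dimensional $\regp{X}$. The main obstacle is step \textup{(ii)}: verifying that $f$ actually smooths the transverse $A_1$ singularities rather than leaving them in place requires either an appeal to Fujiki's case-by-case construction or a local argument based on the uniqueness of crepant resolutions of $A_1$ slices.
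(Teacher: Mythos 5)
Your overall strategy matches the paper's---verify the three conditions separately using the stratification of $\mathrm{Sing}(Y)$ described just before the proposition---but you supply local detail where the paper cites general facts, and the differences matter. For \textup{(i)} the paper simply observes that the removed points are singular (isolated singular points and intersection points of the $2$-dimensional strata); your Chevalley--Shephard--Todd argument is a reasonable way to make this precise, but strictly you need that \emph{no} nontrivial element of the stabilizer is a pseudo-reflection (which holds because every element of $G$ preserves the symplectic form, so its fixed subspace has even codimension): the mere presence of one non-pseudo-reflection element does not prevent a group from being generated by pseudo-reflections. For \textup{(ii)} the paper asserts that $f|_{f^{-1}(U)}\colon f^{-1}(U)\to U$ is a resolution of quotient singularities; your identification of the transverse $A_1$ structure along $U\cap\mathrm{Sing}(Y)$ is the right local picture, and the gap you flag can be closed: a terminalization must extract every crepant divisor of $Y$ (otherwise the center of a non-extracted crepant divisor on $X$ would have discrepancy $0$, contradicting terminality of $X$), and over the transverse $A_1$ locus the unique such extraction is the blow-up of the singular locus, which is smooth there.

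The genuine weak point is \textup{(iii)}. You assert that all fibers of $f$ are at most one-dimensional, attributing this to Fujiki's construction. That is stronger than needed and not safe: for $\mathbb{Q}$-factorial terminalizations of symplectic fourfolds, two-dimensional fiber components over isolated ``dissident'' points do occur, and the points of $Y\setminus U$ are precisely such deep points (intersections of two-dimensional strata and isolated singularities). The paper instead invokes the semismallness bound $\dim f^{-1}(y)\le 2$ from \cite[Corollary 0.2]{Na22}; since $Y\setminus U$ is finite, this already gives $\dim f^{-1}(Y\setminus U)\le 2$ and hence codimension at least $2$ in the fourfold $\regp{X}$. Your conclusion is correct, but the justification should be replaced by this bound (or by an actual verification of fiber dimensions in Fujiki's explicit models); note that the trivial bound $\dim\operatorname{Exc}(f)\le 3$ coming from birationality would \emph{not} suffice, so some nontrivial input of this kind is genuinely required.
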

    \begin{proof}
        The open subset $U$ is obtained by substructing isolated singular points and intersection points of 2-dimensional singular loci from $Y$.
        Thus \ref{cond21} is obvious.

        Because
        $$
            f|_{f^{-1}(U)}:f^{-1}(U)\to U
        $$
        is a resolution of quotient singularities, \ref{cond22} also holds.

        By \cite[Corollary 0.2]{Na22}, $\mathrm{dim}(f^{-1}(y))\leq 2$ for any point $y\in Y$.
        Since $Y\setminus U$ consists of finitely many points,
        we have
        \begin{align*}
            \mathrm{Codim}(\regp{X}\setminus f^{-1}(U), \regp{X})
            &=\mathrm{Codim}(\regp{X}\cap f^{-1}(Y\setminus U), \regp{X})\\
            &\geq 2.
        \end{align*}
        % and 
        % we have \ref{cond23}.
    \end{proof}

    \begin{cor}\label{Ucor}
        Let $U$ be the open subset of $Y$ defined in Proposition \ref{Uprop}.
        Then
        \[
            \pi_1(\regp{X})\cong\pi_1(U).
        \]
    \end{cor}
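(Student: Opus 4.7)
The plan is to assemble two independent isomorphisms from the three conclusions of Proposition \ref{Uprop} and then compose them. First I would exploit conditions \ref{cond22} and \ref{cond23} to compare $\pi_1(f^{-1}(U))$ with $\pi_1(\regp{X})$: the inclusion $f^{-1}(U)\hookrightarrow \regp{X}$ identifies $f^{-1}(U)$ with the complement in the complex manifold $\regp{X}$ of a closed analytic subset of complex codimension at least two. It is a classical fact that removing an analytic subset of complex codimension $\geq 2$ from a connected complex manifold does not change the fundamental group, so the inclusion induces an isomorphism $\pi_1(f^{-1}(U))\xrightarrow{\sim}\pi_1(\regp{X})$.

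Next I would use condition \ref{cond21}, combined with the fact that $Y=(S\times S)/G$ is globally a finite quotient of a complex manifold, to conclude that every point of $U$ has an analytic neighborhood of the form $\mathbb{C}^4/\Gamma$ for some finite subgroup $\Gamma$; equivalently, $U$ is an orbifold in the sense of the paper. The restriction $f|_{f^{-1}(U)}\colon f^{-1}(U)\to U$ is then a resolution of a variety with only quotient singularities. Invoking the standard fact that for any resolution of a variety with quotient singularities the induced map on $\pi_1$ is an isomorphism (this is a well-known consequence of the rationality and simple-connectedness-in-codimension properties of quotient singularities; see e.g.\ results of Koll\'ar and Takayama), I obtain $\pi_1(f^{-1}(U))\xrightarrow{\sim}\pi_1(U)$.

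Composing the two isomorphisms yields $\pi_1(\regp{X})\cong\pi_1(U)$, which is the content of the corollary. The only delicate point in this plan is the second step: one must ensure that the relevant $\pi_1$-invariance for resolutions genuinely holds for the open subvariety $U$ (not just at isolated closed points), which is why it is important that $U$ is itself an orbifold in the global sense. Since $U$ is Zariski open in the global quotient $Y$, this local-to-global issue causes no trouble, and no further analysis is required beyond citing the two standard invariance statements.
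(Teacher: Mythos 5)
Your proposal is correct and follows essentially the same route as the paper: the paper also combines the codimension-$\geq 2$ invariance of $\pi_1$ for the inclusion $f^{-1}(U)\hookrightarrow \regp{X}$ (condition \ref{cond23}) with Koll\'ar's theorem that a resolution of quotient singularities induces an isomorphism on $\pi_1$, applied to $f|_{f^{-1}(U)}\colon f^{-1}(U)\to U$. No substantive differences.
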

    \begin{proof}
        By {\cite[Thm. 7.8]{Kol93}}, we have that
        \[
            f_*:\pi_1(f^{-1}(U))\stackrel{\sim}{\longrightarrow} \pi_1(U).
        \]
        Also, we see
        \[
            \pi_1(f^{-1}(U))\stackrel{\sim}{\longrightarrow} \pi_1(\regp{X})
        \]
        by \ref{cond3}.
        Therefore, the conclusion holds.
    \end{proof}

    \section{Calculation of $\pi_1(X_\mathrm{reg})$}
    \label{sec_pi1}
    In this section, we calculate $\pi_1(X_\mathrm{reg})$ for Fujiki's examples $X$.
    First, we prove the following (general) theorem.
    \begin{thm}
        Let $Z$ be a simply connected complex manifold.
        Let $\mathcal{G}$ be a discrete subgroup of $\mathrm{Aut}(Z)$ which satisfies the following conditions:
        \begin{enumerate}[label=\textup{(\roman*)}]
            \item the action of $\mathcal{G}$ on $Z$ is properly discontiuous, \label{pi1cond1}
            \item the stabilizer group $\mathrm{Stab}(z)$ is finite for all $z\in Z$, \label{pi1cond2}
            \item $\mathrm{Codim}(\mathrm{Fix}(g), Z)\geq 2$ for all $g\in \mathcal{G}\setminus \{1\}$.
        \end{enumerate}
        Define $Y=Z/\mathcal{G}$ and let $\pi:Z\to Y$ be the natural surjection
        (because of the conditions \ref{pi1cond1} and \ref{pi1cond2}, $Y$ is a complex orbifold).
        % The second condition implies that $\mathrm{codim}(Z_\mathrm{sing}, Z)\geq 2$.
        Finally, let $U$ be an arbitrary open subset of $Y$ containing $Y_\mathrm{reg}$ and define $N_U$ to be the subgroup of $\mathcal{G}$ generated by the set $$\bigcup_{z\in\pi^{-1}(U)}\mathrm{Stab}(z).$$
        Then $N_U$ is a normal subgroup of $\mathcal{G}$ and $\pi_1(U)\cong \mathcal{G}/N_U$. 
        \label{pi1}
    \end{thm}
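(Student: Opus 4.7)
My plan is to derive this theorem from Armstrong's classical result, which asserts that for a properly discontinuous action of a group $G$ on a simply connected, locally path-connected space $W$, one has $\pi_1(W/G) \cong G/K$, where $K$ is the (automatically normal) subgroup generated by $\{g \in G : \mathrm{Fix}(g) \neq \emptyset\}$. The normality of $N_U$ is immediate: for any $g \in \mathcal{G}$ and $z \in \pi^{-1}(U)$ one has $g\,\mathrm{Stab}(z)\,g^{-1} = \mathrm{Stab}(gz)$, and the $\mathcal{G}$-invariance of $\pi^{-1}(U)$ implies that conjugation permutes the generating set of $N_U$.

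The main technical step is to show that $\tilde{U} := \pi^{-1}(U)$ is simply connected. First I would prove the containment $Z \setminus \tilde{U} \subset \bigcup_{g \neq 1}\mathrm{Fix}(g)$: any $z \in Z$ with trivial stabilizer admits, by conditions \textup{(i)} and \textup{(ii)}, a neighborhood $W$ on which $\pi$ is a local biholomorphism onto an open subset of $Y$; thus $\pi(z) \in Y_\mathrm{reg} \subset U$, so $z \in \tilde{U}$. By hypothesis \textup{(iii)} and proper discontinuity, $\bigcup_{g \neq 1} \mathrm{Fix}(g)$ is a locally finite union of complex analytic subvarieties of $Z$ of complex codimension at least $2$, hence itself a closed analytic subset of codimension at least $2$. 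Since $Z$ is a simply connected complex manifold, a standard transversality argument shows that removing a closed set of real codimension $\geq 4$ preserves simple connectivity; consequently $\tilde{U}$ is simply connected.

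With these ingredients in hand I would invoke Armstrong's theorem for the restricted $\mathcal{G}$-action on $\tilde{U}$: the action is still properly discontinuous, the quotient is $U$, and an element $g \in \mathcal{G}$ has a fixed point in $\tilde{U}$ exactly when $g \in \mathrm{Stab}(z)$ for some $z \in \tilde{U}$. The subgroup generated by such elements is therefore precisely $N_U$, and the desired isomorphism $\pi_1(U) \cong \mathcal{G}/N_U$ follows. The main obstacle will be the simple-connectedness step: one must chain together the inclusion $Z \setminus \tilde{U} \subset \bigcup_{g \neq 1}\mathrm{Fix}(g)$, the codimension hypothesis, and the transversality principle that a closed subset contained in a codimension-$\geq 2$ analytic subvariety cannot obstruct disks from being filled; if one prefers to avoid citing Armstrong, the same conclusion can be obtained by analyzing the unramified Galois cover $\tilde{U} \setminus \bigcup_g \mathrm{Fix}(g) \to U \cap Y_\mathrm{reg}$ and identifying the kernel of $\pi_1(U \cap Y_\mathrm{reg}) \twoheadrightarrow \pi_1(U)$ with the image of the stabilizers, which is exactly $N_U$ modulo the relations already imposed.
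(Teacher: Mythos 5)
Your proposal is correct, but it reaches the conclusion by a route that differs from the paper's in two respects. First, you prove directly that $\tilde{U}=\pi^{-1}(U)$ is simply connected, via the observation that $Z\setminus\tilde{U}$ is a closed subset of the locally finite union $\bigcup_{g\neq 1}\mathrm{Fix}(g)$ of analytic sets of complex codimension $\geq 2$, so that a disk bounding a loop in $\tilde{U}$ can be perturbed rel boundary off this union; the paper never establishes simple connectedness of $\tilde{U}$ itself, only of $\pi^{-1}(Y_{\mathrm{reg}})$ (the complement of the full fixed locus in $Z$), and then transfers the information to the intermediate quotient by a subgroup-correspondence argument. Second, you delegate the group-theoretic identification $\pi_1(\tilde{U}/\mathcal{G})\cong\mathcal{G}/N_U$ to Armstrong's theorem, whereas the paper in effect reproves the relevant case of that theorem from scratch: it forms the intermediate cover $p:Z/N_U\to Y$, checks that $p^{-1}(U)\to U$ is an unramified Galois covering with group $\mathcal{G}/N_U$ because the images of the stabilizers in $\mathcal{G}/N_U$ are trivial, and then shows $p^{-1}(U)$ is simply connected by a contradiction argument comparing the tower of covers over $Y_{\mathrm{reg}}$ with the lattice of normal subgroups $1\subset N'\subset N_U\subset\mathcal{G}$. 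Your approach is shorter and makes the logical dependence on a known classical result explicit, at the cost of importing Armstrong's hypotheses (which do hold here, since $\tilde{U}$ is an open subset of a manifold and the restricted action remains properly discontinuous with finite stabilizers) and of carrying the full weight of the transversality step for the possibly badly-behaved closed set $Z\setminus\tilde{U}$; the paper's argument is self-contained and only ever needs simple connectedness of the complement of the entire analytic set $\bigcup_{g\neq 1}\mathrm{Fix}(g)$, which is the cleanest instance of that removal lemma. Both proofs rest on the same two pillars --- normality of $N_U$ (which you establish identically) and the codimension-$\geq 2$ hypothesis --- so the difference is one of packaging rather than substance.
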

    \begin{proof}
        For all $g\in \mathcal{G}$, $z\in \pi^{-1}(U)$ and $h\in \mathrm{Stab}(z)$, $hz\in\pi^{-1}(U)$ and $ghg^{-1}\in \mathrm{Stab}(hz)$.
        Hence, $N_U$ is a normal subgroup of $\mathcal{G}$.

        Let $p$ be the projection $Z/N_U\to Y$.
        Then $p^{-1}(U)\to U$ is unramified.
        Indeed, let $\bar{z}$ be an arbitrary point in $p^{-1}(U)$ and take a lift $z\in \pi^{-1}(U)$ of $\bar{z}$.
        The covering $p:p^{-1}(U)\to U$ is a Galois covering whose Galois group equals to $\mathcal{G}/N_U$. The stabilizer group of $\mathrm{Stab}(\bar{z})\subset \mathcal{G}/N_U$ is trivial because
        $$\mathrm{Stab}(\bar{z})
        =\mathrm{Stab}(z)N_U/N_U
        =\{1\}$$ and this implies $p:p^{-1}(U)\to U$ is unramified.

        Moreover, $p^{-1}(U)$ is simply connected.
        To see this, let $q:V\to p^{-1}(U)$ be the universal covering of $p^{-1}(U)$ and assume that $q$ is not the trivial covering.
        Since the composition map $$p\circ q:V\to p^{-1}(U)\to U$$ is also the universal covering of $U$, its restriction $$p\circ q:(p\circ q)^{-1}(Y_\mathrm{reg})\to p^{-1}(Y_\mathrm{reg})\to Y_\mathrm{reg}$$ is an unramified Galois covering.
        On the other hand, since $Z$ is simply connected complex manifold and $\pi^{-1}(Y_\mathrm{reg}$) is the open subset of $Z$ such that the (complex) codimension of its complement is at least two, $\pi^{-1}(Y_\mathrm{reg})$ is also simply connected and $\pi^{-1}(Y_\mathrm{reg})$ is the universal covering of $Y_\mathrm{reg}$.
        Therefore, $\pi:\pi^{-1}(Y_\mathrm{reg})\to Y_\mathrm{reg}$ factors as 
        \begin{equation}
            \pi^{-1}(Y_\mathrm{reg})\to (p\circ q)^{-1}(Y_\mathrm{reg})\xrightarrow{q} p^{-1}(Y_\mathrm{reg})\xrightarrow{p} Y_\mathrm{reg}.
            \label{seq}
        \end{equation}
        Let $N'$ be the normal subgroup of $\mathcal{G}$ which corresponds to $(p\circ q)^{-1}(Y_\mathrm{reg})$. 
        The sequence \eqref{seq} corresponds to the sequence of groups
        $$
            1\subset N' \subset N_U \subset \mathcal{G}.
        $$
        Since $q$ is not trivial covering, $N'\subsetneq N_U$ and hence, there exists a $z\in \pi^{-1}(U)$ such that $\mathrm{Stab}(z)\not\subset N'$.
        If $z'$ is the image of $z$ in $\pi^{-1}(U)/N'=V$, then 
        $$
            \mathrm{Stab}(z')(\subset \mathcal{G}/N')=\mathrm{Stab}(z)N'/N'\neq\{1\}.
        $$
        This contradicts the fact that $p\circ q:V\to p^{-1}(U)\to U$ is unramified.
        Therefore, $q$ is the trivial covering and $p^{-1}(U)$ is simply connected.

        Consequently, $p: p^{-1}(U)\to U$ is the universal covering.
        From this fact and the definition of $p$, we deduce that
        $$
            \pi_1(U)\cong \mathrm{Aut}(p^{-1}(U)/U)\cong \mathcal{G}/N_U.
        $$
    \end{proof}

    \begin{ex}
        As an example of the theorem, we calculate the fundamental group of the regular part of the primitively simplectic orbifold obtained by blowing up some of the singularities of a singular Kummer surface.

        Let $\Lambda\subset\mathbb{C}^2$ be a 2-dimensional lattice and $T=\mathbb{C}^2/\Lambda$ a 2-dimensional complex torus.
        Let $\tau$ be an involution $T$ is defined by $\tau(p)=-p$ and let $Y=T/\langle\tau\rangle$ be the singular Kummer surface associated with the complex torus $T$.
        Let $\mathrm{Sing}(Y)=\{p_1,\dots,p_{16}\}$ be the set of singular points of $Y$.
        Fix a nonempty subset $\Sigma\subset\mathrm{Sing}(Y)$ and let $X$ be a surface obtained by blowing up $X$ at the points in $\Sigma$.
        We prove the following claim.
        \begin{clm}
            Let $p\in \Sigma$ be an arbitrary point. 
            Regard $\mathrm{Sing}(Y)=\{p_1,\dots,p_{16}\}$ as a group isomorphic to $(\Zn{2})^4$ whose identity is $p$ and let $\langle\Sigma\rangle$ be the subgroup of it generated by $\Sigma$.
            Then
            \[
                \pi_1(\regp{X})\cong (\Zn{2})^4/\langle\Sigma\rangle.
            \]
        \end{clm}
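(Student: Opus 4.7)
The plan is to apply Theorem \ref{pi1} with $Z=\mathbb{C}^2$ and $\mathcal{G}=\Lambda\rtimes\langle\tau\rangle\subset\mathrm{Aut}(\mathbb{C}^2)$, the discrete group generated by the lattice translations $t_\lambda:p\mapsto p+\lambda$ and the involution $\tau:p\mapsto-p$. Then $Z/\mathcal{G}=Y$. The hypotheses of Theorem \ref{pi1} are immediate: $\mathcal{G}$ acts properly discontinuously, and every non-identity element is either a translation (with empty fixed locus) or of the form $s_\mu:p\mapsto -p+\mu$ with unique fixed point $\mu/2$, hence of fixed-locus codimension two.

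Next, set $U=Y\setminus(\mathrm{Sing}(Y)\setminus\Sigma)$, so that $U\supset\regp{Y}$. Since each $A_1$-singularity of $Y$ above a point of $\Sigma$ is resolved by a single blowup while the singularities above $\mathrm{Sing}(Y)\setminus\Sigma$ remain, one has $\regp{X}=f^{-1}(U)$ and $f|_{f^{-1}(U)}:f^{-1}(U)\to U$ is a resolution of quotient singularities. By the argument of Corollary \ref{Ucor} (using \cite[Thm. 7.8]{Kol93}), $\pi_1(\regp{X})\cong\pi_1(U)$. Theorem \ref{pi1} then identifies $\pi_1(U)$ with $\mathcal{G}/N_U$, where $N_U$ is generated by stabilizers of points of $\pi^{-1}(U)$. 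The only points of $\mathbb{C}^2$ with nontrivial stabilizer are the half-lattice points $\mu/2\in\tfrac{1}{2}\Lambda$, with stabilizer $\langle s_\mu\rangle$, and the condition $\mu/2\in\pi^{-1}(U)$ is equivalent to $\mu/2\bmod\Lambda\in\Sigma$. Hence
\[
   N_U=\bigl\langle s_\mu\,:\,\tfrac{\mu}{2}\bmod\Lambda\in\Sigma\bigr\rangle.
\]

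To compute the quotient, fix lifts $\mu_1,\dots,\mu_k\in\Lambda$ of the points of $\Sigma$, with $\mu_1/2\bmod\Lambda=p$. A direct calculation gives $s_\mu s_\nu=t_{\mu-\nu}$ and $t_\lambda s_\mu=s_{\mu+\lambda}$, so $N_U$ consists of the translations $t_\lambda$ with $\lambda$ in the sublattice $L:=\langle \mu_i-\mu_j\rangle+2\Lambda$, together with the single reflection-coset $\{s_{\mu_1+\lambda}:\lambda\in L\}$. Using that $s_{\mu_1}\in N_U$, every coset in $\mathcal{G}/N_U$ can be represented by a translation, which yields
\[
   \mathcal{G}/N_U\;\cong\;\Lambda/L.
\]
Since $2\Lambda\subset L$, this quotient factors through $\Lambda/2\Lambda\cong(\Zn{2})^4$. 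Under the canonical identification $\Lambda/2\Lambda\xrightarrow{\sim}\tfrac{1}{2}\Lambda/\Lambda=T[2]\cong\mathrm{Sing}(Y)$, $\mu\mapsto\mu/2$, the image of $L$ corresponds to the subgroup generated by $\{(\mu_i-\mu_1)/2\}=\{q_i-p:q_i\in\Sigma\}$, which is exactly $\langle\Sigma\rangle$ with respect to the group law on $\mathrm{Sing}(Y)$ having $p$ as identity. Therefore $\pi_1(\regp{X})\cong(\Zn{2})^4/\langle\Sigma\rangle$.

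The main obstacle is the last paragraph: describing $N_U$ explicitly as a union of a translation-sublattice and one reflection-coset, and then matching the resulting quotient with the claim's shifted-identity identification $\mathrm{Sing}(Y)\cong(\Zn{2})^4$. Once that bookkeeping is done, the conclusion is forced by Theorem \ref{pi1} and Corollary \ref{Ucor}.
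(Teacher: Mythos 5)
Your proposal is correct and follows essentially the same route as the paper: the same choice of $U=Y_{\mathrm{reg}}\sqcup\Sigma$, the same reduction $\pi_1(X_{\mathrm{reg}})\cong\pi_1(U)$ via \cite[Thm. 7.8]{Kol93}, and the same application of Theorem \ref{pi1} to $\mathbb{C}^2/(\Lambda\rtimes\langle\tau\rangle)$ with $N_U$ generated by the stabilizers $\langle s_\mu\rangle$ of half-lattice points over $\Sigma$. The only (cosmetic) difference is in the final bookkeeping: the paper normalizes so that the origin of $T$ lies over $p$ and writes $N_U=\Lambda'\rtimes\langle\tau\rangle$, whereas you keep the basepoint shift explicit through the coset of $s_{\mu_1}$ and the sublattice $L=\langle\mu_i-\mu_j\rangle+2\Lambda$, which is if anything slightly more careful.
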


        \begin{proof}
        Let $f:X\to Y$ be the blowup morphism and % check: blowup morphis 正しい？
        let $U$ an open subset of $Y$ obtained by getting rid of singular points not belonging to the set $\Sigma$:
        \begin{align*}
            U:=&Y\setminus(\mathrm{Sing}(Y)\setminus \Sigma)\\
            =&Y_\mathrm{reg}\sqcup \Sigma. 
        \end{align*}
        Then the restriction map
        $$
            f\vert_{X_\mathrm{reg}}: X_\mathrm{reg}\to U %% check:vert display
        $$
        is a resolution of quotient singularities.
        Therefore, by {\cite[Thm. 7.8]{Kol93}},
        $$
            \pi_1(X_\mathrm{reg})\stackrel{\sim}{\longrightarrow}\pi_1(U).
        $$
        Hence, we compute $\pi_1(U)$ by applying Theorem \ref{pi1}.
        In order to do that, we write $Y$ as a quotient of a simply connected complex manifold: 
        \begin{align*}
            Y&=T/\langle\tau\rangle \\
            &=(\mathbb{C}^2/\Lambda)/\langle\tau\rangle \\
            &=\mathbb{C}^2/(\Lambda\rtimes\langle\tau\rangle),
        \end{align*}
        where the action of $\tau$ on $\mathbb{C}^2$ is similarly defined by $\tau(x)=-x$ for any $x\in \mathbb{C}^2$.
        We apply Theorem \ref{pi1} by setting $Z=\mathbb{C}^2$ and $\mathcal{G}=\Lambda\rtimes\langle\tau\rangle$.
        Then we get
        \begin{equation*}
            \pi_1(U)
            \cong \mathcal{G}/N_U. 
        \end{equation*}
        and we determine the normal subgroup $N_U$ in the next lemma.
        \begin{lem*}
            Let $\pi: \mathbb{C}^2\to Y(=\mathbb{C}^2/\mathcal{G})$ be the natural projection and
            $\Lambda'$ be the sublattice of $\Lambda$ generated by the set $$\left\{\lambda\in \Lambda\mid\pi\left(\frac{\lambda}{2}\right)\in \Sigma\right\}.$$
            Then  
            $$N_U=\Lambda'\rtimes \langle\tau\rangle.$$
            % 従って特に、
            % \begin{align*}
            %     \pi_1(X_\mathrm{reg})
            %     &\cong G/N_U \\
            %     &= (\Lambda\rtimes\langle\tau\rangle)/(\Lambda'\rtimes\langle\tau\rangle) \\
            %     &\cong \Lambda/\Lambda'
            % \end{align*}
            % となる。
        \end{lem*}
        \begin{proof}[Proof of Lemma]
            For any $\lambda\in \Lambda$, let $t_\lambda\in \mathrm{Aut}(\mathbb{C}^2)$ be the translation map: $$
            t_\lambda:\mathbb{C}^2 \ni x \mapsto x+\lambda\in \mathbb{C}^2.
            $$
            Since $G=\Lambda\rtimes\langle\tau\rangle$, we can write each element of $G$ in the form either $t_\lambda$ or $t_\lambda\tau$.

            % In Theorem \ref{pi1}, the normal subgroup $N_U\triangleleft\; G$ is defined by
            Let us recall the definition of the normal subgroup $N_U\triangleleft\; G$ in Theorem \ref{pi1}:
            $$
                N_U:=\left\langle\bigcup_{x\in\pi^{-1}(U)}\mathrm{Stab}(x)\right\rangle .
            $$
            % Let $x$ be an arbitrary element of $\pi^{-1}(U)$.
            If $t_\lambda$ belongs to $\mathrm{Stab}(x)$ for some $x\in\pi^{-1}(U)$, then
            \begin{align*}
                &t_\lambda (x)=x \\
                \Leftrightarrow& x+\lambda=x \\
                \Leftrightarrow& \lambda=0.
            \end{align*}
            Hence, any non-trivial translation element $t_\lambda$ cannot be in $\mathrm{Stab}(x)$.
            On the other hand, if $t_\lambda\tau\in\mathrm{Stab}(x)$ for some $x\in\pi^{-1}(U)$, then
            \begin{align*}
                &t_\lambda\tau (x)=x \\
                \Leftrightarrow& -x+\lambda=x \\
                \Leftrightarrow& x=\frac{\lambda}{2}.
            \end{align*}
            Therefore, the image of $x$ on $T$ is a 2-torsion point, and so $\pi(x)$ is a singlular point of $Y$.
            Because $\pi(x)\in U=Y_\mathrm{reg}\sqcup \Sigma$, we deduce that $\pi\left(\frac{\lambda}{2}\right)=\pi(x)\in \Sigma$.
            Hence, for any $\lambda\in\Lambda$ such that $\pi\left(\frac{\lambda}{2}\right)\in \Sigma$, we have
            \begin{align*}
                t_{\lambda}\tau\left(\frac{\lambda}{2}\right)
                =-\frac{\lambda}{2}+\lambda
                =\frac{\lambda}{2}
            \end{align*}
            and therefore
            \[
                t_{\lambda}\tau\in \mathrm{Stab}\left(\frac{\lambda}{2}\right).
            \]
            Consequently, we get
            \begin{align*}
                N_U
                &=\left\langle\bigcup_{x\in\pi^{-1}(U)}\mathrm{Stab}(x)\right\rangle\\
                &=\left\langle\left\{t_{\lambda}\tau\mid\pi\left(\frac{\lambda}{2}\right)\in \Sigma\right\}\right\rangle\\
                &=\left\langle\left\{t_{\lambda}\mid\pi\left(\frac{\lambda}{2}\right)\in \Sigma\right\}\right\rangle\rtimes\langle\tau\rangle\\
                &=\Lambda'\rtimes\langle\tau\rangle.
            \end{align*}
            % check: mid 長さ
        \end{proof}
        We return to the proof of the claim.
        From the above, we have
        \begin{align*}
            \pi_1(X_\mathrm{reg})
            &\cong\pi_1(U)\\
            &\cong G/N_U\\
            &\cong (\Lambda\rtimes\langle\tau\rangle)/(\Lambda'\rtimes\langle\tau\rangle)\\
            &\cong \Lambda/\Lambda'\\
            &\cong (\Lambda/2\Lambda)/(\Lambda'/2\Lambda)\\
            &\cong (\Zn{2})^4/\langle\Sigma\rangle.
        \end{align*}
        \label{ex31}
        \end{proof}
    \end{ex}

    By means of Theorem \ref{pi1}, we calculate $\pi_1(X_\mathrm{reg})$ for Fujiki's examples $X$.
    Let $S$ be either a K3 surface or a complex torus and $H$ the finite abelian subgroup of $\mathrm{Aut}(S\times S)$ which is used to construct $X$.
    Let $G:=\langle H,\iota \rangle \subset \mathrm{Aut}(S\times S)$ be the subgroup of $\mathrm{Aut}(S\times S)$ defined in Section \ref{sec_fujiki}, where $\iota\in \mathrm{Aut}(S\times S)$ is the interchanging involution.
    Let $Y=(S\times S)/G$.
    Then a Fujiki's example $X$ is obtained as a $\mathbb{Q}$-factorial terminalization $f:X\to Y$.
    % check: ここちょっと微妙...

    We firstly deal with the cases where $S$ is a K3 surface.

    \begin{cor}
        Let $X$ be a Fujiki's example obtained from a K3 surface $S$.
        Then $\pi_1(X_\mathrm{reg})=\{1\}$.
        Thus, $X$ is an irreducible symplectic orbifold.
        \label{k3}
    \end{cor}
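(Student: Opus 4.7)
The plan is to apply Corollary \ref{Ucor} and Theorem \ref{pi1} in sequence. By Corollary \ref{Ucor}, it suffices to show $\pi_1(U)=\{1\}$ for the open subset $U\subset Y$ of Proposition \ref{Uprop}. Since $S$ is a K3 surface, $Z:=S\times S$ is a simply connected complex manifold, so we set $\mathcal{G}:=G=H\rtimes\langle\iota\rangle$ and try to invoke Theorem \ref{pi1}.

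First I would verify the hypotheses of Theorem \ref{pi1}. Properly discontinuous action and finiteness of stabilizers are automatic because $G$ is a finite group. For the codimension condition, the analysis of $\mathrm{Fix}_{S\times S}(g)$ already carried out in Section \ref{sec_fujiki} shows: for $g=h\in H\setminus\{1\}$, the fixed locus is $\mathrm{Fix}_S(h)\times\mathrm{Fix}_S(h)$, which is a finite set since symplectic automorphisms of a K3 surface have only isolated fixed points, hence of codimension $4$; for $g=\iota h$, the fixed locus is the graph $\{(x,hx)\mid x\in S\}$, which has dimension $2$ in the $4$-dimensional $Z$, hence codimension $2$. Thus condition (iii) holds, and Theorem \ref{pi1} yields $\pi_1(U)\cong G/N_U$.

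The heart of the argument is showing $N_U=G$. The strategy is to exhibit each $\iota h$ ($h\in H$) as a stabilizer element of some point of $\pi^{-1}(U)$, and then use the relation $(\iota h)(\iota h')=h^{-1}h'$ to conclude $H\subset N_U$ as well. Concretely, for each fixed $h\in H$, I would consider points of the form $(x,hx)$ on the graph $\mathrm{Fix}_{S\times S}(\iota h)$; clearly $\iota h\in\mathrm{Stab}((x,hx))$. Recall $\pi^{-1}(U)=Z\setminus G\cdot\bigl(\bigcup_{h'\in H\setminus\{1\}}\mathrm{Fix}_{S\times S}(h')\bigr)$, and the set we are removing is a \emph{finite} set of points (by the isolated-fixed-point property above). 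Hence for generic $x\in S$, the point $(x,hx)$ lies in $\pi^{-1}(U)$, which places $\iota h$ inside $N_U$. Running $h$ over $H$ gives $\iota H\subset N_U$, and then the identity $(\iota h)(\iota h')=h^{-1}h'$ gives $H\subset N_U$. Therefore $N_U=H\cup\iota H=G$ and $\pi_1(X_\mathrm{reg})\cong\pi_1(U)\cong G/N_U=\{1\}$.

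The only mildly delicate step is the \emph{generic choice} of $x$ ensuring $(x,hx)\in\pi^{-1}(U)$; this is where one must use that the deleted locus in $Z$ is zero-dimensional, so for each $h$ we can pick an $x\in S$ avoiding the finitely many bad points in the graph of $h$. Everything else is a bookkeeping consequence of the semidirect product structure $G=H\rtimes\langle\iota\rangle$, and no further information about $H$ (which varies through Fujiki's list) is required, which is why a single uniform argument handles all K3 cases.
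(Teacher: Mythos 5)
Your proposal is correct and follows essentially the same route as the paper: reduce to $\pi_1(U)\cong G/N_U$ via Corollary \ref{Ucor} and Theorem \ref{pi1}, then show $N_U=G$ by choosing, for each $h\in H$, a point on the $2$-dimensional graph $\mathrm{Fix}_{S\times S}(\iota h)$ avoiding the $0$-dimensional deleted locus, so that $\iota h\in N_U$, and finally recover $H\subset N_U$ from products of such elements. The only cosmetic difference is that you work with generic points upstairs in $S\times S$ and use the relation $(\iota h)(\iota h')=h^{-1}h'$ where the paper simply writes $h=\iota\cdot\iota h$; the extra verification of the hypotheses of Theorem \ref{pi1} is a welcome addition but not a change of method.
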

    \begin{proof}
        Define an open subset $U$ of $Y$ as in the Proposition \ref{Uprop}.
        Then by Corollary \ref{Ucor}, we have $\pi_1(X_\mathrm{reg})\cong \pi_1(U)$.

        Therefore, it is enough to show that $\pi_1(U)=\{1\}$.
        Let us apply Theorem \ref{pi1} by setting $Z=S\times S$ and $\mathcal{G}=G$.
        Then $\pi(U)\cong G/N_U$ holds.
        We need to show that $G=N_U$

        Let $h\in H$ be an arbitrary element. 
        Since $$\mathrm{dim}\,\mathrm{Fix}_{S\times S}(\iota h)=\mathrm{dim}\,\pi(\mathrm{Fix}_{S\times S}(\iota h))=2$$ and $$\mathrm{dim}\,Y\setminus U=\mathrm{dim}\,\pi\left(\bigcup_{h\in H\setminus\{1\}}\mathrm{Fix}_{S\times S}(h)\right)=0,$$ we can choose a point $p\in\mathrm{Fix}_{S\times S}(\iota h)$ so that $\pi(p)\in U$.
        Hence, 
        $$
            \iota h\in\mathrm{Stab}(p)\subset N_U.
        $$
        In particular, we have $\iota\in N_U$ by setting $h=1_H$.
        Therefore, for any $h\in H$, we see that 
        $$
            h=\iota\cdot\iota h\in N_U.
        $$
        As a result, 
        $$
            G=H\rtimes \langle\iota\rangle\subset N_U.
        $$

    \end{proof}

    Next, we calculate for the cases where $X$ comes from a complex torus.
    The same argument as above cannot be applied to these cases because $S\times S$ is not simply connected when $S$ is a 2-dimensional complex torus.
    \begin{cor}
        \label{tor}
        Let $X$ be a Fujiki's example obtained from a 2-dimensional complex torus $S$.
        Then
        $$
        \pi_1(X_\mathrm{reg})\cong\left\{
        % \begin{array}{ll}
        %     (\Zn{3})^{2-m} & (for H\cong (\Zn{3})^m\times\Zn{3}, 0\leq m\leq 2)\\
        %     (\Zn{2})^{2-m} & (for H\cong (\Zn{2})^m\times\Zn{4}, 0\leq m\leq 2)\\
        %     \{1\} & (for H\cong \Zn{6})
        % \end{array}
        \begin{array}{ll}
            (\Zn{3})^{2-m} & \textup{(for  the case \ref{t3})}\\
            (\Zn{2})^{2-m} & \textup{(for  the case \ref{t4})}\\
            \{1\} & \textup{(for  the case \ref{t6})}
        \end{array}
        \right. .
        $$
        % Then $\pi_1(X_\mathrm{reg})$ is 
        % \begin{enumerate}[label=\textup{(\roman*)}]
        %     \item $\Zn{2}\times(\Zn{2})^m, \quad(0\leq m\leq 4)$,
        %     \item $\Zn{3}\times(\Zn{3})^m, \quad(0\leq m\leq 2)$,
        %     \item $\Zn{4}\times(\Zn{2})^m, \quad(0\leq m\leq 2)$,
        %     \item $\Zn{6} $.
        % \end{enumerate}

    \end{cor}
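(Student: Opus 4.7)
The plan is to apply Theorem \ref{pi1} to the universal cover $Z := \mathbb{C}^2 \times \mathbb{C}^2$ of $S \times S$. Writing $S = \mathbb{C}^2/\Lambda$, let $\mathcal{G} \subset \mathrm{Aut}(Z)$ be the subgroup generated by $\Lambda \times \Lambda$, the pairs $(\phi, \phi^{-1})$ for each affine lift $\phi(x) = \ell x + \tilde{t}$ of an element $h \in H$, and the swap $\iota(x, y) = (y, x)$. Then $\mathcal{G}$ fits into a discrete extension $1 \to \Lambda \times \Lambda \to \mathcal{G} \to G \to 1$, the hypotheses of Theorem \ref{pi1} are straightforward to verify, and $Y = Z/\mathcal{G}$. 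Taking $U$ as in Proposition \ref{Uprop}, Corollary \ref{Ucor} and Theorem \ref{pi1} yield $\pi_1(\regp{X}) \cong \mathcal{G}/N_U$.

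To identify $N_U$, I would repeat the fixed-point analysis of Section \ref{sec_fujiki}: the elements of $\mathcal{G}$ with a two-dimensional fixed locus are exactly the involutions of the form $\iota \cdot (\phi, \phi^{-1})$, where $\phi$ ranges over affine lifts of elements of $H$, and the generic point of each such locus lies in $\pi^{-1}(U)$. Hence $N_U$ is generated by these involutions. Taking $h = 1_H$ with $\phi = \mathrm{id}$ shows $\iota \in N_U$, and therefore $(\phi, \phi^{-1}) \in N_U$ for every lift $\phi$ of every $h \in H$.

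These identities then force relations among the translations in $\mathcal{G}$. Two lifts $\phi, \phi'$ of the same $h \in H$ differ by a translation $\lambda \in \Lambda$, and comparing the relations $(\phi, \phi^{-1}) \equiv 1$ and $(\phi', (\phi')^{-1}) \equiv 1$ forces $(\lambda, -\ell^{-1}\lambda) \equiv 0$ in $\mathcal{G}/N_U$ for every $\lambda \in \Lambda$ and every linear part $\ell \in L$; specializing $\ell = 1$ yields $(\lambda, -\lambda) \equiv 0$, and subtracting gives $((\ell - 1)\Lambda, 0) \subset N_U$. For a pure translation $h = (1, t) \in H$ with lift $\phi = t_{\tilde{t}}$, the relation $(\phi, \phi^{-1}) \equiv 1$ reads $(\tilde{t}, -\tilde{t}) \equiv 0$ for every lift $\tilde{t} \in \tilde{T}$. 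Using these, every element of $\mathcal{G}$ reduces modulo $N_U$ to a translation $(\lambda, 0)$ with $\lambda \in \Lambda$, and the collected relations show $(\lambda, 0) \equiv 0$ whenever $\lambda$ lies in the sublattice
\[
    \tilde\Lambda_0 := \sum_{\ell \in L}(\ell - 1)\tilde{T} \subset \Lambda.
\]
To promote this to an isomorphism, I would construct the natural homomorphism $\sigma : \mathcal{G} \to \Lambda/\tilde\Lambda_0$ sending $g$ to the sum of the two translation components of $g(0, 0)$ modulo $\tilde\Lambda_0$, verify it is well-defined and kills all generators of $N_U$, and note that $\sigma$ inverts the reduction map above; this gives $\mathcal{G}/N_U \cong \Lambda/\tilde\Lambda_0$.

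The final step is to compute $\Lambda/\tilde\Lambda_0$. Let $n \in \{3, 4, 6\}$ be the order of a generator of the linear part $L$. Since $\ell$ has the $n$-th cyclotomic polynomial as minimal polynomial, $\Lambda$ is a torsion-free $\mathbb{Z}[\zeta_n]$-module of $\mathbb{Z}$-rank $4$, and because $\mathbb{Z}[\zeta_n]$ is a PID for these $n$, it is free of rank two. Hence $\Lambda/(\ell - 1)\Lambda \cong (\mathbb{Z}[\zeta_n]/(\zeta_n - 1))^2$ is $(\Zn{3})^2$, $(\Zn{2})^2$, or trivial according to whether $n = 3$, $n = 4$, or $n = 6$ (in the last case $\zeta_6 - 1$ is a unit, settling case \ref{t6}). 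In cases \ref{t3} and \ref{t4} with $T \ne 0$, the further quotient amounts to modding out by the image of the map $T \to \Lambda/(\ell - 1)\Lambda$, $t \mapsto (\ell - 1)\tilde{t}$; this map is injective because $\ell - 1$ is invertible on $\mathbb{C}^2$, so $T \cong (\Zn{n})^m$ embeds into the order-$n^2$ elementary abelian $n$-group $\Lambda/(\ell - 1)\Lambda$, and the quotient is $(\Zn{n})^{2 - m}$ as claimed. The main obstacle is the third paragraph: isolating the exact translation relations in $\mathcal{G}/N_U$ and using the homomorphism $\sigma$ to confirm that no further relations appear.
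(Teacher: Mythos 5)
Your proposal is correct, and its first half coincides with what the paper does: the paper likewise applies Theorem \ref{pi1} to $Z=\mathbb{C}^4$ with $\mathcal{G}$ the full lift of $G$ (written there as $\Gamma\rtimes D_n$, where $\Gamma\supset\Lambda^2$ is an overlattice absorbing the translation part of $H$), and identifies $N_U$ as generated by exactly the involutions with two-dimensional fixed locus. Where you genuinely diverge is in the computation of $\mathcal{G}/N_U$. The paper proves $N_U=(\Gamma_0\oplus\Gamma_1)\rtimes D_n$ with $\Gamma_k=\Gamma\cap\{y=-\sigma^k x\}$ and then evaluates $\Gamma/(\Gamma_0\oplus\Gamma_1)$ case by case using explicit bases of $\Lambda$ taken from Fujiki; you instead reduce to $\Lambda/\tilde\Lambda_0$ with $\tilde\Lambda_0=(\ell-1)\tilde T$ and evaluate this uniformly via the $\mathbb{Z}[\zeta_n]$-module structure of $\Lambda$ and the norm $\Phi_n(1)\in\{3,2,1\}$ (the two answers agree: the paper's quotient is canonically $\Lambda/(1-\sigma)\tilde T$). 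Your route buys a case-free computation and makes transparent why case \ref{t6} gives the trivial group ($\zeta_6-1$ is a unit); the paper's route avoids the class-number input and stays closer to Fujiki's explicit data. The step you flag as the main obstacle does go through: the map $g\mapsto s(g(0))$ with $s(x,y)=x+y$ lands in $\Lambda$, is multiplicative modulo $\tilde\Lambda_0$ precisely because $(\ell^j-1)\tilde T\subset\tilde\Lambda_0$, and kills every generator $\iota\cdot(\phi,\phi^{-1})$, so no further relations appear.

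Two small corrections. First, to generate all of $\tilde\Lambda_0$ you must also compare lifts of \emph{different} elements of $H$ (e.g.\ $t_{\tilde t}\,\ell^k$ against $\ell^k$), not only lifts of the same $h$ differing by $\lambda\in\Lambda$ together with the pure-translation relations; this is harmless, since you already place $(\phi,\phi^{-1})$ in $N_U$ for every lift of every $h\in H$, which yields $(\tilde t,-\ell^{-k}\tilde t)\equiv 0$ for all $\tilde t\in\tilde T$. Second, in your last paragraph the exponent is governed by the prime $p=\Phi_n(1)$, not by $n$: in case \ref{t4} one has $n=4$, $T\cong(\Zn{2})^m$, and $\Lambda/(\ell-1)\Lambda\cong(\Zn{2})^2$ of order $4$ (not $n^2=16$), so the quotient is $(\Zn{2})^{2-m}$ rather than ``$(\Zn{n})^{2-m}$''. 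With that substitution your conclusion matches the corollary in all cases.
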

    \begin{proof}
        Let us write $S=\mathbb{C}^2/\Lambda$ where $\Lambda\subset\mathbb{C}^2$ is a lattice.
        We choose the image of the origin of $\mathbb{C}^2$ as a origin of $S$.
        Let $U$ an open subset of $Y$ defined in the Proposition \ref{Uprop}.
        Then by Corollary \ref{Ucor}, we have $\pi_1(X_\mathrm{reg})\cong \pi_1(U)$.
        % Define an open subset $U$ of $Y$ by 
        % $$
        %     U=Y\setminus \pi\left(\bigcup_{h\in H\setminus\{1\}}\mathrm{Fix}_{S\times S}(h)\right).
        % $$
        % as in the proof of Corollary \ref{k3}.
        % Then by the same argument, we have
        % $$
        %     \pi_1(X_\mathrm{reg})\cong\pi_1(U).
        % $$
        We calculate $\pi_1(U)$.

        % Let $H=H'\times H_0$ be the decomposition of $H$ into the translation part and the linear part. Since $H$ is abelian, 
        Since $H(\subset\mathrm{Aut}(S))$ is abelian, we have the decomposition $H= H'\times H_0$ where $H'$ is the translation part and $H_0$ is the linear part of $H$.
        We can regard $H'$ as a finite subgroup of $S$ whose elements are preserved by the action of $H_0$.
        Then
        \begin{align*}
            G
            &\cong H\rtimes \langle\iota\rangle\\
            &\cong (H'\times H_0)\rtimes \langle\iota\rangle\\
            &\cong H'\rtimes (H_0\rtimes\langle\iota\rangle).\\
        \end{align*}
        This is the decomposition of $G$ as a subgroup of $\mathrm{Aut}(S\times S)$; $H'$ is the translation part and $H_0\rtimes\langle\iota\rangle$ is the linear part. % check: 文章ごと考え直した方が良さそう。
        The group $H'$ is regarded as a subgroup of the 4-dimensional torus $S\times S$ via the inclusion 
        \begin{align*}
            i: H'\ni\gamma\mapsto(\gamma,-\gamma)\in S\times S.
            \label{emb}
        \end{align*}

        Let $n$ be the order of $H_0$ (by the classification of $H$, $n\in\{3,4,6\}$ and $H_0\cong\Zn{n}$).
        Then $H_0\rtimes\langle\iota\rangle$ is isomorphic to the dihedral group $D_n$ of order $2n$.
        Therefore, $G\cong H'\rtimes D_n$.
        We have 
        \begin{align*}
            (S\times S)/G
            &\cong(\mathbb{C}^4/\Lambda^2)/G\\
            &\cong\mathbb{C}^4/(\Lambda^2\rtimes G)\\
            &\cong\mathbb{C}^4/(\Lambda^2\rtimes (H'\rtimes D_n))\\
            &\cong\mathbb{C}^4/((\Lambda^2\rtimes H')\rtimes D_n).
        \end{align*}
        Since the action of $H'$ is translation, $H'$ and $\Lambda^2$ commutes.
        Hence, $\Lambda^2\rtimes H'\cong\Lambda^2\times H'$.
        This group is an overlattice of $\Lambda^2$ corresponds to the quotient torus $(S\times S)/H'$.
        We denote this overlattice by $\Gamma$.
        Finally, we have
        $$
            Y=(S\times S)/G\cong\mathbb{C}^4/(\Gamma\rtimes D_n).
        $$

        We explain the action of $\Gamma\rtimes D_n$ on $\mathbb{C}^4$.
        The lattice $\Gamma$ acts as translations.
        Let $\sigma$ be a generator of $H_0\cong\Zn{n}$.
        The group $H_0$ originally acts on $S=\mathbb{C}^2/\Lambda$ as the linear part of $H$.
        Hence, $\sigma$ naturally lifts to a linear automorphism of $\mathbb{C}^2$ which preserves the lattice $\Lambda$.
        Because $H$ acts on $S\times S$ by $h(x,y)=(hx, h^{-1}y)$,
        the lifted linear action of $\sigma$ on $\mathbb{C}^2\times\mathbb{C}^2=\mathbb{C}^4$ is also expressed as %check: lifted%
        $$
            \sigma(x,y)=(\sigma x,\sigma^{-1} y),\quad(x,y)\in\mathbb{C}^2\times\mathbb{C}^2.
        $$
        % The fact that $H_0$ and $H'$ commutes means 
        Similarly, the involution $\iota\in \mathrm{Aut}(S\times S)$ lifts to $\mathrm{Aut}(\mathbb{C}^4)$ by the action
        $$
            \iota(x,y)=(y, x),\quad(x,y)\in\mathbb{C}^2\times\mathbb{C}^2=\mathbb{C}^4.
        $$
        % Similarly, while $\iota$ is originally the interchange of $S\times S$, it lifts to the intersection of $\mathbb{C}^2\times\mathbb{C}^2=\mathbb{C}^4$.
        These lifts $\sigma$ and $\iota$ generate $H_0\rtimes\langle\iota\rangle=D_n$,
        and preserve the lattice $\Gamma\subset\mathbb{C}^4$.

        % Since $\mathrm{ord}(\sigma)=n$ and $\sigma$ is symplectic, the eigenvalues of $\sigma$ are $\zeta_n$ and $\zeta_n^{-1}$.
        % Thus, by changing of basis, $\sigma$ can be wrriten as a matrix
        % $$
        %     \sigma=
        %     \begin{pmatrix}
        %         \zeta_n&0\\
        %         0&\zeta_n^{-1}
        %     \end{pmatrix}.
        % $$
        % Since 

        % and $X$ is a partial resolution of it.

        Let $\pi:\mathbb{C}^4\to Y=\mathbb{C}^4/(\Gamma\rtimes D_n)$ be the natural projection.
        Then by applying Theorem \ref{pi1} (by setting $Z=\mathbb{C}^4$ and $\mathcal{G}=\Gamma\rtimes D_n$), we have % check: どうアプライするか
        $$
            \pi_1(U)\cong(\Gamma\rtimes D_n)/N_U,
        $$
        where $N_U=\langle\bigcup_{p\in\pi^{-1}(U)}\mathrm{Stab}(p)\rangle$ is the normal subgroup of $\Gamma\rtimes D_n$ defined Theorem \ref{pi1}.
        Let us calculate $N_U$.
        % Let $\zeta_n=e^{\frac{2\pi i}{n}}$ and consider the action of $\zeta_n$ on $\mathbb{C}^2$ defined by
        % $$
        %     \zeta_n(a,b)=(\zeta_n a, \zeta_n^{-1} b),\quad(a,b)\in\mathbb{C}^2.
        % $$
        For each $k=0,\ldots,n-1$, 
        % , i.e., 
        % $$
        %     \zeta_k(a,b)=(\zeta_k a,\zeta_k b)
        % $$
        % for $(a,b)\in\mathbb{C}^2$.
        let 
        $$
            P_k=\{(x,y)\in \mathbb{C}^2\times\mathbb{C}^2=\mathbb{C}^4\mid y=-\sigma x\}.
        $$
        Each $P_k$ is a 2-plane in $\mathbb{C}^4$.
        We set $\Gamma_k=\Gamma\cap P_k$ and 
        $$
            \Gamma'=\sum_{k=0}^{n-1}\Gamma_k\quad(\subset \Gamma) %check
        $$
        We prove the following claim.
    \begin{clm}\leavevmode
        \begin{enumerate}[label=\textup{(\roman*)}]
            \item $N_U=\Gamma'\rtimes D_n$ as a subgroup of $\Gamma\rtimes D_n$,
            \item $\Gamma'=\Gamma_0\oplus\Gamma_1$,
            \item Define $\phi:\mathbb{C}^4\stackrel{\sim}{\longrightarrow}\mathbb{C}^4$ by
            $$
                \phi(x,y)=(x,\sigma y),\quad (x,y)\in\mathbb{C}^2\times\mathbb{C}^2=\mathbb{C}^4.
            $$
            Then $\Gamma_1=\phi(\Gamma_0)$
        \end{enumerate}
    \end{clm}
    \begin{proof}[Proof of Claim]\leavevmode
        %check enumerate?
    \begin{enumerate}[label=\textup{(\roman*)}]
        \item
        For each $\gamma\in\Gamma$, let $t_\gamma\in \mathrm{Aut}(\mathbb{C}^4)$ be the translation $p\mapsto p+\gamma$.
        Then each element $g\in\Gamma\rtimes D_n$ is expressed in the form either $t_\gamma\sigma^k$ or $t_\gamma\iota\sigma^k$ for some $\gamma\in\Gamma$ and $k\in\{0,\ldots,n-1\}$.
        Let us suppose that  $$t_\gamma\sigma^k\in\bigcup_{p\in\pi^{-1}(U)}\mathrm{Stab}(p).$$
        Then for some $p\in\pi^{-1}(U)$, we have 
        \begin{align*}
            p
            &=t_\gamma\sigma^k(p)
            =\sigma^kp+\gamma.
        \end{align*}
        Therefore,
        $$
            \pi(p)=\sigma^k\pi(p).
        $$
        Since this contradicts the definition of $U$,
        any element in the form of $t_\gamma\sigma^k$ dose not belong to $\bigcup_{p\in\pi^{-1}(U)}\mathrm{Stab}(p)$.
        On the other hand, if 
        $$
            t_\gamma\iota\sigma^k\in\bigcup_{p\in\pi^{-1}(U)}\mathrm{Stab}(p),
        $$
        then $\mathrm{Fix}_{\mathbb{C}^4}(t_\gamma\iota\sigma^k)\ne\emptyset$.
        Conversely, if $\mathrm{Fix}_{\mathbb{C}^4}(t_\gamma\iota\sigma^k)\ne\emptyset$, then 
        $$
            \mathrm{Fix}_{\mathbb{C}^4}(t_\gamma\iota\sigma^k)\setminus
            \bigcup_{\gamma\in\Gamma, k=0,\ldots,n-1}\mathrm{Fix}_{\mathbb{C}^4}(t_\gamma\sigma^k)\ne\emptyset
        $$
        because the first term is 2-dimensional locus and the second term is 0-dimensional locus.
        Let $p$ be a point of the above set.
        Again by the definition of $U$, we have $\pi(p)\in U$
        and therefore, 
        $$
            t_\gamma\iota\sigma^k\in\bigcup_{p\in\pi^{-1}(U)}\mathrm{Stab}(p).
        $$
        Consequently,
        $$
            \bigcup_{p\in\pi^{-1}(U)}\mathrm{Stab}(p)=\{t_\gamma\iota\sigma^k\mid\mathrm{Fix}_{\mathbb{C}^4}(t_\gamma\iota\sigma^k)\ne\emptyset\}.
        $$
        % Fix an integer $k\in\{0,\ldots,n-1\}$.
        % Then 
        Moreover, we claim that 
        $$
        \mathrm{Fix}_{\mathbb{C}^4}(t_\gamma\iota\sigma^k)\ne\emptyset\Leftrightarrow\gamma\in\Gamma_k.
        $$
        If $p\in\mathrm{Fix}_{\mathbb{C}^4}(t_\gamma\iota\sigma^k)$, 
        then 
        \begin{align*}
            \iota\sigma^kp+\gamma=p
            \Leftrightarrow\gamma=p-\iota\sigma^kp
        \end{align*}
        and hence, 
        \begin{align*}
            \iota\sigma^k(\gamma)
            &=\iota\sigma^k(p-\iota\sigma^kp)\\
            &=\iota\sigma^kp-p\\
            &=\gamma.
        \end{align*}
        This implies $\gamma\in P_k\cap\Gamma=\Gamma_k$. %check: more deteal
        On the other hand, if $\gamma\in\Gamma_k$, 
        then 
        \begin{align*}
            t_\gamma\iota\sigma^k\left(\frac{\gamma}{2}\right)
            &=t_\gamma\left(-\frac{\gamma}{2}\right)\\
            &=-\frac{\gamma}{2}+\gamma\\
            &=\frac{\gamma}{2}
        \end{align*}
        and so 
        $$
        \frac{\gamma}{2}\in\mathrm{Fix}_{\mathbb{C}^4}(t_\gamma\iota\sigma^k)\ne\emptyset.
        $$
        Finally, we have
        \begin{align*}
            N_U
            &=\langle\bigcup_{p\in\pi^{-1}(U)}\mathrm{Stab}(p)\rangle\\
            &=\langle\{t_\gamma\iota\sigma^k\mid\gamma\in\Gamma_k,k=0.\ldots,n-1\}\rangle\\
            &=\langle\{t_\gamma\mid\gamma\in\bigcup_{k=0}^{n-1}\Gamma_k\}\cup D_n\rangle\\ 
            &=\langle\{t_\gamma\mid\gamma\in\Gamma'\}\cup D_n\rangle\\ 
            &=\Gamma'\rtimes D_n \quad(\subset \Gamma\rtimes D_n). %check: more deteal
        \end{align*}

        \item 
        First, we show that $\Gamma_k\subset\Gamma_0+\Gamma_1$ for all $k=0,\ldots,n-1$.
        This implies $\Gamma'=\Gamma_0+\Gamma_1$.
        We prove by induction on $k$. %check: prove it?
        Let $\gamma=(x,y)\in \Gamma_k$.
        Then $y=-\sigma^k x$
        % Let us also consider the action of $\zeta_n$ on $\mathbb{C^2}\times\mathbb{C^2}=\mathbb{C}^4$ defined by
        % $$
        %     \zeta_n(x,y)=(\zeta_nx,\zeta_n^{-1}y),\quad(x,y)\in\mathbb{C}
        % $$
        and for arbitrary $l$, we have
        \begin{align*}
            \sigma^l\gamma
            &=\sigma^l(x, -\sigma^k x)\\
            &=(\sigma^l x, -\sigma^{k-l}x)\\
            &=(\sigma^l x, -\sigma^{k-2l}(\sigma^l x))\\
            &\in \Gamma_{k-2l}.
        \end{align*}
        We also have
        \begin{align*}
            \gamma+\sigma^{l}\gamma
            &=(x+\sigma^l x, -\sigma^k x-\sigma^{k-l}x)\\
            &=(x+\sigma^l x, -\sigma^{k-l}(x+\sigma^l x))\\
            &\in \Gamma_{k-l}.
        \end{align*}
        In the case where $k$ is even, let $l=\frac{k}{2}$.
        Then $\sigma^{l}\gamma\in\Gamma_0$ and $\gamma+\sigma^{l}\gamma\in\Gamma_{l}$.
        By the induction hypothesis, $\Gamma_{l}\subset \Gamma_0+\Gamma_1$.
        Hence, 
        $$
            \gamma=-\sigma^{l}\gamma+(\gamma+\sigma^l\gamma)\in\Gamma_0+\Gamma_1
        $$
        Similarly, in the case where $k$ is odd, let $l=\frac{k-1}{2}$.
        Then $\sigma^{l}\gamma\in\Gamma_1$ and $\gamma+\sigma^{l}\gamma\in\Gamma_{l+1}\subset \Gamma_0+\Gamma_1$.
        Hence, $\gamma\in\Gamma_0+\Gamma_1$.
        Therefore, we have proved that $\Gamma'=\Gamma_0+\Gamma_1$.
        Since $\Gamma_0\cap\Gamma_1\subset P_0\cap P_1=\{0\}$,
        we have $\Gamma'=\Gamma_0\oplus\Gamma_1$.

        \item 
        Since $\Gamma$ is preserved by $\phi$, we have
        \begin{align*}
            \Gamma_1
            &=P_1\cap \Gamma\\
            &=\phi(P_0)\cap \phi(\Gamma)\\
            &=\phi(P_0\cap\Gamma)\\
            &=\phi(\Gamma).
        \end{align*}

    \end{enumerate}
    \end{proof}
        By the claim, we have
        \begin{align*}
            \pi_1(X_\mathrm{reg})
            &\cong \pi(U)\\
            &\cong (\Gamma\rtimes D_n)/(\Gamma'\rtimes D_n)\\
            &\cong \Gamma/\Gamma'\\
            &\cong \Gamma/(\Gamma_0\oplus\Gamma_1)\\
            &\cong \Gamma/(\Gamma_0\oplus\phi(\Gamma_0))
        \end{align*}
        We calculate $\Gamma_0$ and $\phi(\Gamma_0)$.
        Let us consider, for example, the case of \ref{t4} and $m=1$, i.e., the case where $H\cong\Zn{2}\times\Zn{4}$.
        The calculations for the other cases are similar.

        We can choose a basis $e_1,e_2,e_3,e_4$ for the lattice $\Lambda\subset\mathbb{C}^2$ so that 
        \begin{align*}
            &\sigma (e_1)=e_2,\quad \sigma (e_2)=-e_1,\quad \sigma (e_3)=e_4,\quad \sigma (e_4)=-e_3
        \end{align*}
        (see the proof of \cite[Theorem 12,.7]{Fu82}).
        Let $f=\frac{e_1+e_2}{2}\in\mathbb{C}^2$.
        Then 
        \begin{align*}
            \sigma(f)
            &=\frac{-e_2+e_1}{2}\\
            &=f-e_2\\
            &\in f + \Lambda
        \end{align*}
        and so the image of $f$ in $S=\mathbb{C}^2/\Lambda$ is fixed by $\sigma$.
        Without loss of generality, we may assume that $H'=\Zn{2}$ (the translation part of $H$) is generated by this image. %check: more detail
        Since $H'$ is embedded into $S^2$ by the inclusion $i$ and $\Gamma\subset\mathbb{C}^4$ is the overlattice of $\Lambda^2$ corresponds to this subgroup $i(H')\subset S^2$,
        we can take 
        \begin{align*}
            (f, -f),\; &(e_2, 0),\; (e_3, 0),\; (e_4, 0),\;\\
            (0, e_1),\; &(0, e_2),\; (0, e_3),\; (0, e_4)
        \end{align*}
        as a basis for $\Gamma$.
        Then we readily see that
        \begin{align*}
            \Gamma_0
            &=P_0\cap\Gamma\\
            &=\mathbb{Z}\langle\,(f, -f),\; (e_2, -e_2),\; (e_3, -e_3),\; (e_4, -e_4)\,\rangle.
        \end{align*}
        % Then we see readily that the elements
        % \begin{align*}
        %     (f, -f),\; (e_2, -e_2),\; (e_3, -e_3),\; (e_4, -e_4)\\
        % \end{align*} 
        % form a basis for $\Gamma_0=P_0\cap\Gamma$.
        and therefore, 
        \begin{align*}
            \phi(\Gamma_0)
            &=\mathbb{Z}\langle\,(f, -f+e_2),\; (e_2, e_1),\; (e_3, -e_4),\; (e_4, e_3)\,\rangle.
        \end{align*}
        A direct computation shows that
        \begin{align*}
            &\Gamma/(\Gamma_0\oplus\phi(\Gamma_0))\\
            =&\frac{\mathbb{Z}\langle\,(f, -f),\; (e_2, 0),\; (0, e_1),\; (0, e_2)\,\rangle}{\mathbb{Z}\langle\,(f, -f),\; (e_2, -e_2),\; (f, -f+e_2),\; (e_2, e_1)\,\rangle}\\
            &\oplus \frac{\mathbb{Z}\langle\,(e_3, 0),\; (e_4, 0),\; (0, e_3),\; (0, e_4)\,\rangle}{\mathbb{Z}\langle\,(e_3, -e_3),\; (e_4, -e_4),\; (e_3, -e_4),\; (e_4, e_3)\,\rangle}\\
            =&\{1\}\oplus\Zn{2}\\
            =&\Zn{2} %check: not direct
        \end{align*} 
        As a result, we have $\pi_1(X_\mathrm{reg})\cong\Zn{2}$ for the case of \ref{t4} and $m=1$.
    \end{proof}

    \begin{rem}
        \label{rem}
        Corollary \ref{tor} shows that $X_\mathrm{reg}$ is not simply connected in the cases of \ref{t3}, $m=0,1$ and \ref{t4}, $m=0,1$.
        Hence, $X$ is not an irreducible symplectic orbifold in these cases.
        The universal cover of $X_\mathrm{reg}$ extends to an \'{e}tale in codimension one cover $\tilde{X}\to X$ and $\tilde{X}$ is an irreducible symplectic orbifold (with only isolated quotient singularities).
        The following table shows the singularities of $\tilde{X}$ (we don't give a proof of this result here):
        \begin{center}
            \begin{tabular}{ccl}
                \toprule
                Case & $H$ & Singularities of $\tilde{X}$ \\
                \midrule
                \ref{t3}, $m=0$ & $\Zn{3}$ & smooth \\
                \ref{t3}, $m=1$ & $(\Zn{3})^2$ & smooth \\
                \ref{t4}, $m=0$ & $\Zn{4}$ & $a_2=36$ \\
                \ref{t4}, $m=1$ & $\Zn{2}\times\Zn{4}$ & $a_2=36$ \\ 
                \bottomrule
            \end{tabular}
        \end{center}
        For the first two cases, $\tilde{X}\cong \mathrm{Kum}^2(S)$.
        For the last two cases, $\tilde{X}$ is deformation equivalent to the Fujiki's example of case \ref{t4}, $m=2$.
        
    \end{rem}

    \bibliography{pi1}
    \bibliographystyle{plain}

\end{document}